\newtheorem{theorem}{Theorem}[section]
\newtheorem{lemma}[theorem]{Lemma}
\newtheorem{claim}[theorem]{Claim}
\theoremstyle{definition}
\newtheorem{rmk}[theorem]{Remark}
\theoremstyle{definition}
\theoremstyle{definition}
\theoremstyle{definition}
\newtheorem{defn}[theorem]{Definition}
\theoremstyle{definition}
\theoremstyle{definition}
\theoremstyle{definition}
\newcommand{\ep}{\varepsilon}
\newcommand{\de}{\delta}
\newcommand{\ga}{\gamma}
\newcommand{\De}{\Delta}
\newcommand{\cH}{\mathcal{H}}
\newcommand{\cG}{\mathcal{G}}
\newcommand{\cF}{\mathcal{F}}
\newcommand{\cB}{\mathcal{B}}
\newcommand{\MIS}{\mathrm{MIS}}
\newcommand{\MTF}{\mathcal{M}_3}
\newcommand{\cM}{\mathcal{M}}
\newcommand{\setm}{-}
\title{The typical structure of maximal triangle-free graphs}
\author{J\'{o}zsef Balogh, Hong Liu, \v{S}\'{a}rka Pet\v{r}\'{i}\v{c}kov\'{a}, Maryam Sharifzadeh}
\author{
 J\'ozsef Balogh
 \thanks{Department of Mathematical Sciences,
 University of Illinois at Urbana-Champaign, Urbana, Illinois 61801, USA {\tt
jobal@math.uiuc.edu}. Research is partially supported by Simons Fellowship, NSF
CAREER Grant
 DMS-0745185, Marie Curie FP7-PEOPLE-2012-IIF 327763 and Arnold O. Beckmann Research Award RB15006.}
 \quad
 Hong Liu
 \thanks{Department of Mathematical Sciences,
 University of Illinois at Urbana-Champaign, Urbana, Illinois 61801, USA {\tt
hliu36@illinois.edu}.}
\quad
\v{S}\'{a}rka Pet\v{r}\'{i}\v{c}kov\'{a}
 \thanks{Department of Mathematical Sciences,
 University of Illinois at Urbana-Champaign, Urbana, Illinois 61801, USA {\tt
petrckv2@illinois.edu}.}
 \quad
 Maryam Sharifzadeh
 \thanks{Department of Mathematical Sciences,
 University of Illinois at Urbana-Champaign, Urbana, Illinois 61801, USA {\tt
sharifz2@illinois.edu}.}
}
\begin{document}
\maketitle

\begin{abstract}
Recently, settling a question of Erd\H{o}s, Balogh and Pet\v{r}\'{i}\v{c}kov\'{a} showed that there are at most $2^{n^2/8+o(n^2)}$ $n$-vertex maximal triangle-free graphs, matching the previously known lower bound. Here we characterize the typical structure of maximal triangle-free graphs. We show that almost every maximal triangle-free graph $G$ admits a vertex partition $X\cup Y$ such that $G[X]$ is a perfect matching and $Y$ is an independent set.  

Our proof uses the Ruzsa-Szemer\'{e}di removal lemma, the Erd\H{o}s-Simonovits stability theorem, and recent results of Balogh-Morris-Samotij and Saxton-Thomason on characterization of the structure of independent sets in hypergraphs. The proof also relies on a new bound on the number of maximal independent sets in triangle-free graphs with many vertex-disjoint $P_3$'s, which is of independent interest.  
\end{abstract}


\section{Introduction}
Given a family of combinatorial objects with certain properties, a fundamental problem in extremal combinatorics is to describe the \emph{typical} structure of these objects. This was initiated in a seminal work of Erd\H{o}s, Kleitman, and Rothschild~\cite{Erdos} in 1976. They proved that almost all triangle-free graphs on $n$ vertices are bipartite, that is, the proportion of $n$-vertex triangle-free graphs that are not bipartite goes to zero as $n \to \infty$. Since then, various extensions of this theorem have been established.
The typical structure of $H$-free graphs has been studied when $H$ is a large clique~\cite{B6,KPR},  $H$ is a fixed color-critical subgraph~\cite{PS}, $H$ is a finite family of subgraphs~\cite{BBS2}, and $H$ is an induced subgraph~\cite{BB}. For sparse $H$-free graphs, analogous problems  were examined in~\cite{Balogh2, OPT}. 
In the context of other combinatorial objects, the typical structure of hypergraphs with a fixed forbidden subgraph is investigated for example in~\cite{BMu1,PSch}; the typical structure of intersecting families of discrete structures is studied in~\cite{BDDLS}; see also~\cite{abms14} for a description of the typical sum-free set in finite abelian groups.

In contrast to the family of all $n$-vertex triangle-free graphs, which has been well-studied, very little was known about the subfamily consisting of all those that are maximal (under graph inclusion) triangle-free. Note that the size of the family of triangle-free graphs on $[n]$ is at least $2^{n^2/4}$ (all subgraphs of a complete balanced bipartite graph), and at most $2^{n^2/4+o(n^2)}$ by the result of Erd\H{o}s, Kleitman, and Rothschild from 1976. Until recently, it was not even known if the subfamily of maximal triangle-free graphs is significantly smaller. As a first step, Erd\H{o}s suggested the following problem (as stated in~\cite{Simonovits}): determine or estimate the number of maximal triangle-free graphs on $n$ vertices. The following folklore construction shows that there are at least $2^{n^2/8}$  maximal triangle-free graphs on the vertex set $[n]:=\{1,\dots,n\}$. 

\medskip

\noindent\textbf{Lower bound construction.} 
Assume that $n$ is a multiple of $4$. Start with a graph on a vertex set $X\cup Y$ with $|X|=|Y|=n/2$ such that $X$ induces a perfect matching and $Y$ is an independent set (see Figure \ref{fig1a}). For each pair of a matching edge $x_1 x_2$ in $X$ and a vertex $y\in Y$, add exactly one of the edges $x_1 y$ or $x_2 y$. Since there are $n/4$ matching edges in $X$ and $n/2$ vertices in $Y$, we obtain $2^{n^2/8}$ triangle-free graphs. These graphs may not be maximal triangle-free, but since no further edges can be added between $X$ and $Y$, all of these $2^{n^2/8}$ graphs extend to distinct maximal ones.

\begin{figure*}[t!]
    \centering
    \begin{subfigure}[t]{0.4\textwidth}
        \centering
        \includegraphics{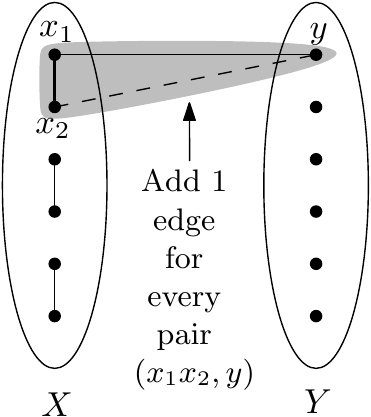}
        \caption{$r=2$}\label{fig1a}
    \end{subfigure}%
    \hspace{1cm}
        \begin{subfigure}[t]{0.4\textwidth}
        \centering
        \includegraphics{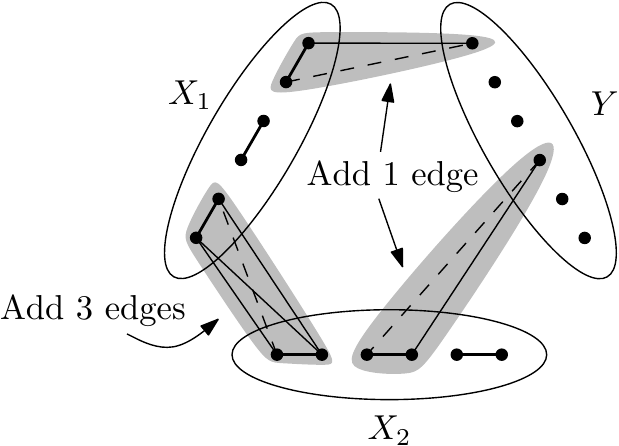}
        \caption{$r=3$}\label{fig1b}
    \end{subfigure}
   \caption{Lower bound contruction for maximal $K_{r+1}$-free graphs.}
\end{figure*}

\medskip

Balogh and Pet\v{r}\'{i}\v{c}kov\'{a}~\cite{BP} recently proved a matching upper bound, that the number of maximal triangle-free graphs on vertex set $[n]$ is at most $2^{n^2/8+o(n^2)}$. Now that the counting problem is resolved, one would naturally ask how do most of the maximal triangle-free graphs look, i.e. what is their typical structure. Our main result provides an answer to this question.  
\begin{theorem}\label{thm_main}
For almost every maximal triangle-free graph $G$ on $[n]$, there is a vertex partition $X\cup Y$ such that $G[X]$ is a perfect matching and $Y$ is an independent set.
\end{theorem}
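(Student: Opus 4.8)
I will bound the \emph{number} of maximal triangle-free graphs on $[n]$ that do \emph{not} admit a partition as in the statement, and show it is $2^{n^2/8-\Om(n^2)}$; since the lower bound construction already gives $2^{n^2/8}$ maximal triangle-free graphs, all of which \emph{do} admit such a partition, this suffices. Two elementary facts about a maximal triangle-free graph $G$ on $\ge 2$ vertices drive everything: $G$ has minimum degree at least $1$, and for every $v$ the set $N_G(v)$ is a \emph{maximal} independent set of $G$ (if $u\notin N_G[v]$, maximality forces $u$ and $v$ to share a neighbour, which lies in $N_G(v)$). The same reasoning gives the refinements I will lean on: if $b$ has no neighbour inside a set $B\ni b$, then $N_G(b)$ is a maximal independent set of $G[V(G)\setminus B]$; and if $Z\subseteq V(G)$ then, by triangle-freeness, $N_G(v)\cap Z$ is an independent set of $G[Z]$ for every $v\notin Z$. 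These turn ``counting $G$'' into ``counting choices of maximal independent sets'', which is where the quantitative inputs enter.

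\textbf{Step 1 (coarse structure).} Applying the hypergraph container theorem of Balogh--Morris--Samotij / Saxton--Thomason to the $3$-uniform triangle hypergraph yields $2^{o(n^2)}$ containers, each a graph with $o(n^3)$ triangles, covering all triangle-free graphs. By the Ruzsa--Szemer\'edi removal lemma each container is within $o(n^2)$ edges of a triangle-free graph, and the Erd\H{o}s--Simonovits stability theorem then splits the containers into those that are $o(n^2)$-close to bipartite, and those that, after removal, have at most $(1/4-\de)n^2$ edges (for a fixed $\de>0$) and are bounded away from bipartite. For a container $C$ of the second type one shows that $C$ supports only $2^{n^2/8-\Om(n^2)}$ maximal triangle-free subgraphs: since such a subgraph $G$ is forced to be dense in $C$ and $C$ contains a robustly non-bipartite part $Z$ on which $G$ has independence ratio bounded strictly below $1/2$, the sets $N_G(v)\cap Z$ (which are independent in $G[Z]$) have too few possibilities, producing the gain; summing over all containers this is negligible. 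Hence almost every maximal triangle-free $G$ admits a partition $V(G)=A\cup B$ with $e(G[A])+e(G[B])=o(n^2)$.

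\textbf{Step 2 (forcing a perfect matching and an independent set).} Fix such a near-bipartition and, up to a negligible family, let $G[B]$ be the lighter side; one first checks $G[B]$ is essentially empty, since if $e(G[B])\ge\ep_1 n$ the count below becomes too small (the clean encoding is available only for vertices of $B$ without a $B$-neighbour). Given $G[A]=F$ and $G[B]$ (near-)empty, each $b\in B$ satisfies that $N_G(b)=N_G(b)\cap A$ is a maximal independent set of $F$, so the number of bipartite completions is at most $\big(\#\{\text{MIS of }F\}\big)^{|B|}$; the classical bound $\#\{\text{MIS of }F\}\le 2^{|A|/2}$ (Hujter--Tuza), with equality only for a perfect matching, then gives at most $2^{|A||B|/2}\le 2^{n^2/8}$ completions per $F$, and only $2^{o(n^2)}$ choices of $F$. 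Since there are at least $2^{n^2/8}$ maximal triangle-free graphs altogether, all but a $o(1)$-fraction must come from $F$ with $\#\{\text{MIS of }F\}=2^{(1/2-o(1))|A|}$ and $|A|=|B|=n/2+o(n)$. This is exactly where the new lemma enters: if $F$ has at least $\xi n$ vertex-disjoint $P_3$'s then $\#\{\text{MIS of }F\}\le 2^{(1/2-\eta(\xi))|A|}$, so in the dominant case $F=G[A]$ has only $o(n)$ disjoint $P_3$'s; taking a maximal $P_3$-packing and noting its complement is $P_3$-free, $F$ is a matching plus isolated vertices after deleting an $o(n)$-set. A parallel count shows $F$ has only $o(n)$ isolated vertices in the dominant case --- each genuinely isolated vertex of $A$ forfeits a factor $2^{|B|}$ of completions, since its edges to $B$ get pinned down by the container rather than remaining free. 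Thus $G[A]$ is, up to an $o(n)$-set, a perfect matching and $G[B]$ is empty.

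\textbf{Step 3 (from ``almost'' to the exact partition), and the main obstacle.} It remains to absorb the $o(n)$ exceptional vertices and upgrade ``almost a perfect matching'' to an honest partition $X\cup Y$ with $G[X]$ a perfect matching and $Y$ independent. Here maximality is used directly: if, after forming $X$ from the matched part of $A$ and $Y$ from $B$, a leftover vertex $v$ could not be placed, then some non-edge at $v$ would have no common neighbour --- precisely the phenomenon that, in the extremal construction, forces each vertex of $Y$ to be joined to \emph{exactly} one endpoint of each matching edge --- contradicting maximality; this forces each leftover vertex to acquire a unique partner or to join $Y$, with the few genuine obstructions (such as the parity of $|X|$) swept into the negligible family. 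I expect the main obstacle to be Step 1's treatment of the containers far from bipartite: there the ``one side almost independent'' encoding is unavailable, the container may carry nearly $n^2/4$ edges, and squeezing out the needed $2^{-\Om(n^2)}$ saving appears to require a delicate combination of the container bound, stability (to locate a part with independence ratio bounded below $1/2$), and the maximal-independent-set lemma.
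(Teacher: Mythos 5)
Your Steps 1--2 follow the paper's general strategy (containers + removal + stability to get a near-bipartition, then the $\MIS$ bound for triangle-free graphs with many disjoint $P_3$'s to constrain $G[A]$), and your encoding of the cross-edges --- each $b\in B$ with no $B$-neighbour contributes a maximal independent set of $F=G[A]$ --- is a legitimate variant of the paper's link-graph/Cartesian-product argument, at least when $B$ is genuinely independent. But there are two genuine gaps. First, in Step 2 the reduction to ``$G[B]$ essentially empty'' is asserted, not proved: after Step 1 you only know $e(G[A])+e(G[B])=o(n^2)$, so $B$ may contain up to $\Theta(n)$ vertices with $B$-neighbours, and for each such $b$ your encoding degrades from ``maximal independent set of $F$'' to ``independent set of $F$'' (the domination of a vertex $u\in A$ may pass through a $B$-neighbour of $b$), which costs roughly $2^{0.08|A|}$ per such vertex when $F$ is close to a perfect matching --- far more than any saving you have identified. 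The paper avoids this by bounding the extensions by $\MIS(S\,\square\,T)$ where $S=G[X]$, $T=G[Y]$, and extracting disjoint $P_3$'s from the product when \emph{both} sides have many edges (two large matchings give many disjoint $C_4$'s, hence $P_3$'s).

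The more serious gap is Step 3. The deterministic claim that maximality lets you ``absorb'' the $o(n)$ exceptional vertices is false: there exist maximal triangle-free graphs that are $o(n)$-close to the target structure but do not admit any partition into a perfect matching plus an independent set (e.g.\ with one isolated vertex in $X$, or one $P_3$ in $G[X]$); the point is that such graphs are \emph{few}, not that they cannot occur. Moreover, your proposed absolute count against the $2^{n^2/8}$ lower bound cannot establish this: a single isolated vertex or single $P_3$ yields a saving of only $2^{-\Theta(n)}$ per fixed $(X,Y,S,T)$, which is swamped by the $2^{o(n^2)}$ choices of $(S,T)$ (and even by the $2^{\Theta(n\log n)}$ choices of a perfect matching $M$). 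The paper's Section~4 resolves exactly this: it fixes $(X,Y,M)$, encodes the bad graphs economically (the classes $\cB(X,Y,M,s,t)$ and $\cB(X,Y,M,r)$, where only the $O((s+t)\beta n)$ vertices in the exceptional configurations and their inner neighbourhoods need to be specified), proves the matching lower bound $|\cG(X,Y,M)|\ge(1+o(1))2^{|X||Y|/2}$ by showing that almost all of the $2^{|X||Y|/2}$ graphs produced by the lower-bound construction are already maximal and have $X\cup Y$ as a max-cut, and controls the overcounting of a single graph across different triples $(X,Y,M)$. None of this relative counting appears in your proposal, and without it the passage from the asymptotic statement to Theorem~\ref{thm_main} does not go through. (Your worry about containers far from bipartite in Step 1 is, by contrast, a non-issue: the paper simply splits containers by edge count --- sparse containers yield a small link graph and hence at most $2^{|A|/2}\le 2^{n^2/8-\Om(n^2)}$ maximal subgraphs, while dense containers are close to bipartite by stability.)
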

It is worth mentioning that once a maximal triangle-free graph has the above partition $X\cup Y$, then there has to be exactly one edge between every matching edge of $X$ and every vertex of $Y$. Thus Theorem~\ref{thm_main} implies that almost all maximal triangle-free graphs have the same structure as the graphs in the lower bound construction above. Furthermore, our proof yields that the number of maximal triangle-free graphs without the desired structure is exponentially smaller than the number of maximal triangle-free graphs: Let $\MTF(n)$ denote the set of all maximal triangle-free graphs on $[n]$, and $\cG(n)$ denote the family of graphs from $\MTF(n)$ that admit a vertex partition such that one part induces a perfect matching and the other is an independent set. Then there exists an absolute constant $c>0$ such that for $n$ sufficiently large, $|\MTF(n)-\cG(n)|\leq 2^{-cn}|\MTF(n)|$.

\medskip

It would be interesting to have similar results for $\cM_r(n)$, the number of maximal $K_r$-free graphs on $[n]$. Alon pointed out that if the number of maximal $K_r$-free graphs is $2^{c_r n^2 +o(n^2)}$, then $c_r$ is monotone increasing in $r$, though not necessarily strictly monotone. For the lower bound, a discussion with Alon and \L uczak led to the following construction that gives $2^{(1-1/r+o(1))n^2/4}$ maximal $K_{r+1}$-free graphs: Assume that $n$ is a multiple of $2r$. Partition the vertex set $[n]$ into $r$ equal classes $X_1,\dots, X_{r-1},Y$, and place a perfect matching into each of $X_1,\dots, X_{r-1}$ (see Figure \ref{fig1b}). Between the classes we have the following connection rule:
between the vertices of two matching edges from different classes $X_i$ and $X_j$ place exactly three edges, and between a vertex in $Y$ and a matching edge in $X_i$ put exactly one edge. For the upper bound, by Erd\H{o}s, Frankl and R\"{o}dl~\cite{EFR}, $\cM_{r+1}(n)\leq 2^{(1-1/r+o(1))n^2/2}$. A slightly improved bound is given in \cite{BP}: For every $r$ there is $\ep(r)>0$ such that $|\cM_{r+1}(n)|\leq 2^{(1-1/r-\ep(r))n^2/2}$ for $n$ sufficiently large. We suspect that the lower bound is the ``correct value'', i.e. that $|\cM_{r+1}(n)|= 2^{(1-1/r+o(1))n^2/4}$.

\medskip

\noindent\textbf{Related problem.} 
There is a surprising connection between the family of maximal triangle-free graphs and the family of maximal sum-free sets in $[n]$. More recently, Balogh, Liu, Sharifzadeh and Treglown~\cite{BLST} proved that the number of maximal sum-free sets in $[n]$ is $2^{(1+o(1))n/4}$, settling a conjecture of Cameron and Erd\H{o}s. Although neither of the results imply one another, the methods in both of the papers fall in the same general framework, in which a rough structure of the family is obtained first using appropriate container lemma and removal lemma. These are Theorems~\ref{thm1} and~\ref{thm2} in this paper, and a group removal lemma of Green~\cite{G} and a granular theorem of Green and Ruzsa~\cite{G-R} in the sum-free case. Both problems can then be translated into bounding the number of maximal independent sets in some auxiliary link graphs. In particular, one of the tools here (Lemma~\ref{lem-mis-p3}) is also utilized in~\cite{BLST2} to give an asymptotic of the number of maximal sum-free sets in $[n]$.

\medskip

\noindent\textbf{Organization.}
We first introduce all the tools in Section~\ref{s_Tools}, then we prove Lemma~\ref{lem-asymp}, the asymptotic version of Theorem~\ref{thm_main}, in Section~\ref{s_Asymptotic}. Using this asymptotic result we prove Theorem~\ref{thm_main} in Section~\ref{s_Proof}. 

\medskip

\noindent\textbf{Notation.} 
For a graph $G$, denote by $|G|$ the number of vertices in $G$. An $n$-vertex graph $G$ is \emph{$t$-close to bipartite} if $G$ can be made bipartite by removing at most $t$ edges.  Denote by $P_k$ the path on $k$ vertices. Write $\MIS(G)$ for the number of maximal independent sets in $G$. The \emph{Cartesian product} $G \square H$ of graphs $G$ and $H$ is a graph with vertex set $V(G)\times V(H)$ such that two vertices $(u,u')$ and $(v,v')$ are adjacent if and only if either $u=v$ and $u'v'\in E(H)$, or  $u'=v'$ and $uv\in E(G)$. For a fixed graph $G$, let
$N(v)$ be the set of neighbors of a vertex $v$ in $G$, and let $d(v):=|N_{G}(v)|$ and $\Gamma(v):=N(v)\cup\{v\}$. For $v\in V(G)$ and $X\subseteq V(G)$, denote by $N_X(v)$ the set of all neighbors of $v$ in $X$ (i.e. $N_X(v)=N(v)\cap X$), and let $d_X(v):=|N_X(v)|$. Denote by $\De(X)$ the maximum degree of the induced subgraph $G[X]$. Given a vertex partition $V=X_1\cup X_2$, edges with one endpoint in $X_1$ and the other endpoint in $X_2$ are \emph{$[X_1,X_2]$-edges}. A vertex cut $V=X\cup Y$ is a \emph{max-cut} if the number of $[X,Y]$-edges is not smaller than the size of any other cut. The \emph{inner neighbors} of a vertex $v$ are its neighbors in the same partite set as $v$ (i.e.~$N_{X_i}(v)$ if $v\in X_i$). The \emph{inner degree} of a vertex is the number of its inner neighbors. 
 We say that a family $\cF$ of maximal triangle-free graphs is \emph{negligible} if there exists an absolute constant $C>0$ such that $|\cF|<2^{-Cn}|\cM_3(n)|$.

\section{Tools}\label{s_Tools}
Our first tool is  a corollary of recent powerful counting theorems of Balogh-Morris-Samotij~\cite[Theorem 2.2.]{Balogh}, and Saxton-Thomason~\cite{Saxton}.

\begin{theorem}\label{thm1} 
For all $\delta>0$ there is $c=c(\delta)>0$ such that there is a family $\cF$ of at most $2^{c\cdot \log n\cdot n^{3/2}}$ graphs on $[n]$, each containing at most $\delta n^3$ triangles, such that for every triangle-free graph $G$ on $[n]$ there is an $F\in \cF$ such that $G \subseteq  F$, where $n$ is sufficiently large.
\end{theorem}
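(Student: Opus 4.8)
The plan is to obtain $\cF$ directly from the hypergraph container method. Let $\cH=\cH_n$ be the $3$-uniform hypergraph with vertex set $V(\cH)=E(K_n)$, so that $N:=|V(\cH)|=\binom n2$, and whose hyperedges are the triples of edges of $K_n$ that span a triangle, so that $e(\cH)=\binom n3$. Two facts make $\cH$ a natural input to the container theorem: first, the independent sets of $\cH$ are exactly the triangle-free graphs on $[n]$; second, $\cH$ is nearly regular with tiny higher co-degrees, since its average degree is $d=3e(\cH)/N=n-2$, its maximum degree is $\Delta_1(\cH)=n-2$ (the number of triangles on a fixed edge), and $\Delta_2(\cH),\Delta_3(\cH)\le 1$ because two edges of $K_n$ lie in at most one common triangle. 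So I would simply apply the container theorem of Balogh--Morris--Samotij~\cite{Balogh} (equivalently Saxton--Thomason~\cite{Saxton}) to $\cH$.

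To invoke that theorem with a target \emph{fingerprint fraction} $\tau$, one must verify a balanced co-degree condition of the shape $\Delta_j(\cH)\lesssim_\delta \tau^{\,j-1}d$ for $j=1,2,3$. The case $j=1$ is immediate ($\Delta_1(\cH)=d$), the case $j=2$ asks only $1\lesssim_\delta\tau n$, and the binding case $j=3$ asks $1\lesssim_\delta\tau^{2} n$, i.e.\ $\tau\gtrsim_\delta n^{-1/2}$. Accordingly I would take $\tau=C_\delta\,n^{-1/2}$ for a suitable constant $C_\delta=C(\delta)>0$; for $n$ large all three conditions then hold comfortably, the slack at levels $2$ and $3$ being huge since $\Delta_2,\Delta_3\le 1$.

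With this $\tau$, the container theorem yields a family $\cF$ of subsets of $V(\cH)=E(K_n)$ --- that is, of graphs on $[n]$ --- such that every triangle-free graph $G$ on $[n]$ satisfies $G\subseteq F$ for some $F\in\cF$, each $F\in\cF$ spans at most $\delta\binom n3\le\delta n^3$ triangles, and each $F$ is determined by a \emph{fingerprint} $S\subseteq V(\cH)$ with $|S|=O_\delta(\tau N)=O_\delta(n^{3/2})$. Counting fingerprints,
\[
 |\cF|\ \le\ \sum_{i\le O_\delta(n^{3/2})}\binom Ni\ \le\ N^{\,O_\delta(n^{3/2})}\ =\ 2^{\,O_\delta(n^{3/2}\log n)},
\]
which is the asserted bound with $c=c(\delta)$. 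If one prefers the weaker single-step container lemma, which only produces containers with at most $(1-\ep_0)\binom n3$ triangles for an absolute $\ep_0>0$, one iterates it $O(\log(1/\delta))$ times to push the triangle count below $\delta n^3$; this enlarges the fingerprints, hence the exponent, by a factor $O(\log(1/\delta))$, again absorbed into $c(\delta)$.

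I expect the only real work to be bookkeeping: the statements in~\cite{Balogh,Saxton} phrase the co-degree hypothesis through a specific co-degree function with explicit normalizing constants (factors like $r!$ and $2^{\binom r2}$), and the iteration, if used, must be checked to cost only a factor $O_\delta(1)$ in the exponent $n^{3/2}\log n$ rather than something worse. Because $\Delta_2,\Delta_3\le 1$ leaves so much room, I do not anticipate any genuine obstruction once the constants are tracked; the content of the statement is really just that the triangle hypergraph is a textbook input to the container theorem, and that its parameters give fingerprints of size $\Theta_\delta(n^{3/2})$.
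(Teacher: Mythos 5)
Your proposal is correct and follows essentially the same route as the paper, which states Theorem~\ref{thm1} as a direct corollary of the Balogh--Morris--Samotij and Saxton--Thomason container theorems without writing out the derivation. Your verification of the co-degree conditions for the triangle hypergraph with $\tau=C_\delta n^{-1/2}$ and the resulting fingerprint count $2^{O_\delta(n^{3/2}\log n)}$ is exactly the standard instantiation the authors have in mind.
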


\noindent
The graphs in $\cF$ in the above theorem will be referred to as \emph{containers}. A weaker version of Theorem~\ref{thm1}, which can be concluded from the Szemer\'{e}di Regularity Lemma, could be used instead of Theorem~\ref{thm1} here. The only difference is that the upper bound on the size of $\cF$ is $2^{o(n^2)}$.

We need two well-known results. The first is the Ruzsa-Szemer\'{e}di triangle-removal lemma~\cite{Ruzsa} and the second is the Erd\H{o}s-Simonovits stability theorem \cite{ES}:
\begin{theorem}\label{thm2}
For every $\ep>0$ there exists $\delta=\delta(\ep)>0$ and $n_0(\ep)>0$ such that any graph $G$ on $n>n_0(\ep)$ vertices with at most $\delta n^3$ triangles can be made triangle-free by removing at most $\ep n^2$ edges.
\end{theorem}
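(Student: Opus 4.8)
The plan is the classical route: deduce the statement from Szemer\'edi's Regularity Lemma together with a triangle-counting estimate for regular triples. Fix $\ep>0$. I would first choose an auxiliary parameter $\eta=\eta(\ep)>0$, small enough that $\eta<\ep/8$ and that the counting estimate below goes through, and apply the Regularity Lemma to $G$ to obtain an $\eta$-regular equipartition $V(G)=V_1\cup\dots\cup V_k$ with $1/\eta\le k\le M(\eta)$. The crucial point is that $M(\eta)$, although tower-type in $1/\eta$, depends only on $\ep$; this is exactly what makes $\delta$ a legitimate function of $\ep$ alone.

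The second step is the usual cleaning. From $G$ delete: all edges inside a single part $V_i$; all edges between a pair $(V_i,V_j)$ that fails to be $\eta$-regular; and all edges between a pair $(V_i,V_j)$ with density $d(V_i,V_j)<\ep/2$. A routine count bounds the number of deleted edges by $\big(\tfrac{1}{2k}+\tfrac{\eta}{2}+\tfrac{\ep}{4}\big)n^2<\ep n^2$ (the first term since there are $k$ parts of size $\approx n/k$, the second since there are at most $\eta\binom{k}{2}$ irregular pairs, the third from the density threshold). Call the resulting graph $G'$; it is obtained from $G$ by deleting at most $\ep n^2$ edges, so it remains to show $G'$ is triangle-free for an appropriate $\delta$.

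Suppose, for contradiction, that $G'$ contains a triangle. Since no edges survive inside a part, its three vertices lie in distinct parts $V_i,V_j,V_l$, and by construction each of the three pairs among these is $\eta$-regular of density at least $\ep/2$. Now the standard triangle-counting lemma for regular triples applies: all but at most a $2\eta$ fraction of the vertices $v\in V_i$ satisfy $|N(v)\cap V_j|\ge(\ep/2-\eta)|V_j|$ and $|N(v)\cap V_l|\ge(\ep/2-\eta)|V_l|$, and for such $v$ the regularity of $(V_j,V_l)$ forces at least $(\ep/2-\eta)\,|N(v)\cap V_j|\,|N(v)\cap V_l|$ edges between these two neighbourhoods, each producing a triangle through $v$. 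Summing, the number of triangles of $G'$ — and hence of $G$ — with one vertex in each of $V_i,V_j,V_l$ is at least $c(\ep)(n/k)^3$ with $c(\ep):=(1-2\eta)(\ep/2-\eta)^3>0$, which, using $k\le M(\eta)$, is at least $c(\ep)M(\eta)^{-3}n^3$.

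Finally I would set $\delta=\delta(\ep):=\tfrac12\,c(\ep)M(\eta)^{-3}$ and take $n_0(\ep)$ large enough that all the estimates above hold (absorbing the $1/k$ terms and the equipartition rounding). Then a graph $G$ on $n>n_0(\ep)$ vertices with at most $\delta n^3$ triangles cannot contain the triangle postulated above, so $G'$ is triangle-free, as required. The only genuine work is making the triangle-counting estimate precise and keeping the constants cleanly separated (the density $\ep/2$ against the error $\eta$, and the accumulation of $O(\eta)$ terms); the rest is bookkeeping. One could of course simply cite the result, or obtain better — though still not polynomial — dependence of $\delta$ on $\ep$ via Fox's proof, or deduce it from the hypergraph removal lemma, but the regularity argument above is the most self-contained.
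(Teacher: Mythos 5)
Your argument is correct: this is the standard Szemer\'edi-regularity proof of the triangle removal lemma, and the cleaning estimates, the $2\eta$-fraction of exceptional vertices, and the counting bound $(1-2\eta)(\ep/2-\eta)^3(n/k)^3$ are all in order (you rightly flag the one condition that matters, namely $\ep/2-\eta\ge\eta$ so that the neighbourhoods are large enough for regularity to apply). Note that the paper does not prove this statement at all --- it is imported as a black box with a citation to Ruzsa--Szemer\'edi --- so your write-up supplies the standard proof of a result the authors only quote; nothing in it conflicts with how the theorem is used later.
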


\begin{theorem}\label{thm3}
For every $\ep >0$ there exists $\delta=\delta(\ep)>0$ and $n_0(\ep)>0$ such that every triangle-free graph $G$ on $n>n_0(\ep)$ vertices with at least $\frac{n^2}{4}-\delta n^2$ edges can be made bipartite by removing at most $\ep n^2$ edges.
\end{theorem}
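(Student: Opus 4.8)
The plan is to give a short self-contained combinatorial argument: for triangle-free host graphs the Erd\H{o}s--Simonovits stability theorem is really a \emph{defect} version of Mantel's theorem, and essentially F\"uredi's one-line proof of Mantel's bound, with the slack retained, does the job. In fact this route yields the stronger statement that one may take $\delta(\ep)=\ep$ with no lower bound on $n$ needed.

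First I would fix a \emph{maximum} independent set $A\subseteq V(G)$, write $\alpha:=|A|$, and set $B:=V(G)\setminus A$, so that $|B|=n-\alpha$. Since $G$ is triangle-free, the neighbourhood of every vertex is independent, hence the maximum degree obeys $\Delta(G)\le\alpha(G)=\alpha$. Because $A$ is independent, no edge of $G$ lies inside $A$, so $e(G)=e(A,B)+e(B)$, where $e(B)=e(G[B])$ and $e(A,B)$ counts the edges between $A$ and $B$. Double counting the edges meeting $B$ gives $\sum_{v\in B}d(v)=2e(B)+e(A,B)$, and subtracting the two identities yields the key equation $e(B)=\sum_{v\in B}d(v)-e(G)$.

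Next I would bound the sum from above: $\sum_{v\in B}d(v)\le |B|\,\Delta(G)\le(n-\alpha)\alpha\le n^2/4$, the last step by AM--GM (more precisely $(n-\alpha)\alpha=n^2/4-(\alpha-n/2)^2$). Feeding this, together with the hypothesis $e(G)\ge n^2/4-\delta n^2$, into the key equation gives $e(B)\le \delta n^2$. Deleting the at most $\delta n^2$ edges inside $B$ therefore turns $G$ into a bipartite graph with parts $A$ and $B$, which is exactly the assertion with $\ep=\delta$.

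I do not anticipate a genuine obstacle here; the two points needing a word of justification are the inequality $\Delta(G)\le\alpha(G)$ (immediate, since a non-independent neighbourhood would close a triangle) and the discipline of keeping, rather than discarding, the term $e(B)$ in F\"uredi's proof of Mantel's theorem. If one instead wanted an argument extending to general $K_r$-free host graphs (the full Erd\H{o}s--Simonovits theorem), the standard alternative is to apply the Szemer\'edi regularity lemma together with the removal/counting lemma, pass to a $K_r$-free reduced graph that is near-extremal for Tur\'an's theorem, and analyse that; but for the triangle case used in this paper the elementary argument above is both cleaner and quantitatively sharp.
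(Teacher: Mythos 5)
Your proof is correct, and it actually establishes a stronger, fully quantitative statement: one may take $\delta(\ep)=\ep$ and drop the hypothesis $n>n_0(\ep)$ altogether. Note that the paper does not prove Theorem~\ref{thm3} at all --- it quotes the Erd\H{o}s--Simonovits stability theorem~\cite{ES} as a black box, in the general form that applies to $K_{r+1}$-free graphs and whose standard proofs go through the original Erd\H{o}s--Simonovits counting arguments or the regularity method. Your argument is the triangle-specific shortcut (F\"uredi's defect version of Mantel's theorem), and every step checks out: with $A$ a maximum independent set, $\alpha=|A|$ and $B=V(G)\setminus A$, triangle-freeness gives $\Delta(G)\le\alpha$; the two counts $e(G)=e(A,B)+e(B)$ and $\sum_{v\in B}d(v)=2e(B)+e(A,B)$ yield $e(B)=\sum_{v\in B}d(v)-e(G)\le (n-\alpha)\alpha-e(G)\le n^2/4-e(G)\le\delta n^2$, and deleting the $e(B)$ edges inside $B$ leaves a bipartite graph with parts $A$ and $B$. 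What the citation buys the authors is uniformity over all forbidden cliques (relevant to their discussion of $\cM_{r+1}(n)$, where no such one-line argument is available); what your route buys is a self-contained proof with the sharp dependence $\delta=\ep$, which is more than the paper needs but certainly suffices for its hierarchy of constants in~\eqref{eq-para}.
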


We also need the following lemma, which is an extension of results of Moon-Moser~\cite{MM} and Hujter-Tuza~\cite{HT}.

\begin{lemma}\label{lem-mis-p3}
Let $G$ be an $n$-vertex triangle-free graph. If $G$ contains at least $k$ vertex-disjoint $P_3$'s, then 
\begin{eqnarray}\label{eq-mis}
\MIS(G)\le 2^{\frac{n}{2}-\frac{k}{25}}.
\end{eqnarray}
\end{lemma}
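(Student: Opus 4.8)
The plan is to prove the slightly stronger statement, by strong induction on $n$: \emph{if $G$ is triangle-free on $n$ vertices and $k=\nu_3(G)$ is the maximum number of vertex-disjoint $P_3$'s in $G$, then $\MIS(G)\le 2^{n/2-k/25}$.} Since the right-hand side decreases in $k$, this gives the lemma for every smaller $k$ as well. The only tool is the standard branching inequality: for any vertex $v$, every maximal independent set either contains $v$ — and $I\mapsto I\setminus\{v\}$ is a bijection between these and the maximal independent sets of $G-\Gamma(v)$ — or it avoids $v$, in which case it is itself a maximal independent set of $G-v$; hence $\MIS(G)\le \MIS(G-\Gamma(v))+\MIS(G-v)$. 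The induction step splits according to $\Delta(G)$.

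Case $\Delta(G)\ge 3$: pick any vertex $v$ with $d(v)=d\ge 3$ and apply the branching inequality. The point is to track how $\nu_3$ drops. Deleting the single vertex $v$ destroys at most one member of a maximum disjoint-$P_3$ collection, so $\nu_3(G-v)\ge k-1$. For $G-\Gamma(v)$ the key observation is that $\Gamma(v)$ meets at most $d$ members of such a collection: any $P_3$ that contains $v$ also contains a neighbour of $v$, so $\Gamma(v)$ overlaps it in at least two of its vertices, and the remaining $d-1$ vertices of $\Gamma(v)$ meet at most $d-1$ further $P_3$'s; hence $\nu_3(G-\Gamma(v))\ge k-d$. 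Feeding the induction hypothesis into both terms yields
\[
\MIS(G)\le 2^{\frac{n-1}{2}-\frac{k-1}{25}}+2^{\frac{n-1-d}{2}-\frac{k-d}{25}}
=2^{\frac n2-\frac k{25}}\Bigl(2^{-\frac12+\frac1{25}}+2^{-\frac{1+d}{2}+\frac d{25}}\Bigr),
\]
and one checks that the bracketed quantity is strictly below $1$ for every integer $d\ge 3$: it is decreasing in $d$, and at $d=3$ it equals $2^{-0.46}+2^{-1.88}<1$.

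Case $\Delta(G)\le 2$: here $G$ is a vertex-disjoint union of paths and cycles — and triangle-freeness forces every cycle component to have length at least $4$ (indeed $C_3$ itself would violate the claimed bound). Since $\MIS$ is multiplicative over components, $\nu_3$ is additive, and $\nu_3(P_\ell)=\nu_3(C_\ell)=\lfloor\ell/3\rfloor$, it suffices to verify $\MIS(H)\le 2^{|H|/2-\lfloor|H|/3\rfloor/25}$ for every path and every cycle $H$. Both $\MIS(P_\ell)$ and $\MIS(C_\ell)$ satisfy a linear recurrence of the form $a_\ell=a_{\ell-2}+a_{\ell-3}$, whose dominant root is the plastic number $\rho=1.3247\ldots$ with $\log_2\rho<\tfrac12-\tfrac1{75}$; combined with $\lfloor\ell/3\rfloor/25\le \ell/75$, this settles all large $\ell$, and the finitely many small paths and cycles are checked by hand from the explicit values (the tightest is $C_5$, where $\MIS(C_5)=5<2^{5/2-1/25}$).

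The main obstacle is simply that the numerical inequality in the first case is genuinely tight — at $d=3$ the bracket is about $0.9987$ — so the sharp bound $\nu_3(G-\Gamma(v))\ge k-d(v)$ is essential (the naive bound $k-d(v)-1$ would make it fail), and in fact the constant $25$ is close to the smallest integer for which this closes. The remaining work is the bookkeeping of $\nu_3$ under vertex deletion and the finite base-case verification in the bounded-degree case; I do not anticipate a conceptual difficulty beyond these.
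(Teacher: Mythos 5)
Your proposal is correct and follows essentially the same route as the paper: the branching inequality $\MIS(G)\le \MIS(G-v)+\MIS(G-\Gamma(v))$ with the key estimate $\nu_3(G-\Gamma(v))\ge k-d(v)$ for a vertex of degree $d(v)\ge 3$, followed by the reduction of the $\Delta\le 2$ case to paths and cycles via multiplicativity and the recurrence $a_\ell=a_{\ell-2}+a_{\ell-3}$. The only (harmless) differences are that you spell out why deleting $\Gamma(v)$ destroys at most $d(v)$ (rather than $d(v)+1$) of the disjoint $P_3$'s --- a point the paper asserts without proof --- and that you close the path/cycle case via the dominant root of the recurrence rather than by running the induction through the recurrence directly.
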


\begin{proof}
The proof is by induction on $n$. The base case of the induction is $n=1$ with $k=0$, for which $\MIS(G)=1\leq 2^{\frac{1}{2}-\frac{0}{25}}$.

For the inductive step, let $G$ be a triangle-free graph on $n\geq 2$ vertices with $k$ vertex-disjoint $P_3$'s, and let $v$ be any vertex in $G$. Observe that $\MIS(G-\Gamma(v))$ is the number of maximal independent sets containing $v$, and that $\MIS(G-\{v\})$ bounds from above the number of maximal independent sets not containing $v$. Therefore, 
$$\MIS(G)\le \MIS(G-\{v\})+\MIS(G-\Gamma(v)).$$
If $G$ has $k$ vertex-disjoint $P_3$'s, then $G-\Gamma(v)$ has at least $k-d(v)$ vertex-disjoint $P_3$'s, and so, by the induction hypothesis,

$$\MIS(G) \leq 2^{\frac{n-1}{2}-\frac{k-1}{25}}+ 2^{\frac{n-(d(v)+1)}{2}-\frac{k-d(v)}{25}}\le 2^{\frac{n}{2}-\frac{k}{25}}\left(2^{-\frac{1}{2}+\frac{1}{25}}+2^{-\frac{d(v)+1}{2}+\frac{d(v)}{25}}\right).$$
The function $f(x)=2^{-\frac{1}{2}+\frac{1}{25}}+2^{-\frac{x+1}{2}+\frac{x}{25}}$ is a decreasing function with $f(3)\approx 0.9987<1$. So, if there exists a vertex of degree at least $3$ in $G$, then we have $\MIS(G)\leq 2^{\frac{n}{2}-\frac{k}{25}}$ as desired.

It remains to verify \eqref{eq-mis} for graphs with $\Delta(G)\leq 2$. Observe that we can assume that $G$ is connected. Indeed, if  $G_1,\dots, G_l$ are maximal components of $G$, and each of $G_i$ has $n_i$ vertices and $k_i$ vertex-disjoint $P_3$'s, then $$\MIS(G)=\prod_{i} \MIS(G_i) \le \prod_i 2^{\frac{n_i}{2}-\frac{k_i}{25}}=2^{\sum_i \frac{n_i}{2}-\sum_i\frac{k_i}{25}}=2^{\frac{n}{2}-\frac{k}{25}}.$$ 

Every connected graph with $\Delta(G)\leq 2$ and $n\geq 2$ vertices is either a path or a cycle. 
 Suppose first that $G$ is a path $P_n$. We have $\MIS(P_2)=2\leq 2^{\frac{2}{2}-\frac{0}{25}}$, $\MIS(P_3)=2\leq 2^{\frac{3}{2}-\frac{1}{25}}$. By F\"uredi~\cite[Example~1.1]{Furedi}, 
 $\MIS(P_n)= \MIS(P_{n-2})+\MIS(P_{n-3})$ for all $n\geq 4$. By the induction hypothesis thus
\begin{eqnarray*}
\MIS(P_n)\leq  2^{\frac{n-2}{2}-\frac{k-1}{25}}+2^{\frac{n-3}{2}-\frac{k-1}{25}}\le 2^{\frac{n}{2}-\frac{k}{25}}\left(2^{-1+\frac{1}{25}}+2^{-\frac{3}{2}+\frac{1}{25}}\right)\leq 2^{\frac{n}{2}-\frac{k}{25}}.
\end{eqnarray*}
Let now $G$ be a cycle $C_n$. We have $\MIS(C_4)=2\leq 2^{4/2-1/25}$ and $\MIS(C_5)=5\leq 2^{5/2-1/25}$. By F\"uredi~\cite[Example~1.2]{Furedi},  $\MIS(C_n)=\MIS(C_{n-2})+\MIS(C_{n-3})$ for all $n\ge 6$. Therefore, by the induction hypothesis,
$$\MIS(C_n)\leq 2^{\frac{n-2}{2}-\frac{k-1}{25}}+2^{\frac{n-3}{2}-\frac{k-1}{25}}\le 2^{\frac{n}{2}-\frac{k}{25}}.$$
\end{proof}
\begin{rmk}
A disjoint union of $C_5$'s and a matching shows that the constant $c$ for which $\MIS(G)\le 2^{\frac{n}{2}-\frac{k}{c}}$ in Lemma~\ref{lem-mis-p3} cannot be smaller than $5.6$.
\end{rmk}

\section{Asymptotic result}\label{s_Asymptotic}
In this section we prove an asymptotic version of Theorem~
\ref{thm_main}:
\begin{lemma}\label{lem-asymp} Fix any $\ga>0$. Almost every maximal triangle-free graph $G$ on the vertex set $[n]$ satisfies the following: for any max-cut $V(G)=X\cup Y$, there exist $X'\subseteq X$ and $Y'\subseteq Y$ such that 

(i) $|X'|\leq \ga n$ and $G[X\setm X']$ is an induced perfect matching, and

(ii) $|Y'|\leq \ga n$ and $Y\setm Y'$ is an independent set.
\end{lemma}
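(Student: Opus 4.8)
The plan is to bound the number of "bad" maximal triangle-free graphs — those for which some max-cut $X\cup Y$ cannot be cleaned up into a perfect matching plus an independent set by removing $\ga n$ vertices from each side — and show this count is negligible compared to $|\MTF(n)|\ge 2^{n^2/8}$. First I would apply the container theorem (Theorem~\ref{thm1}) with a small $\delta$ to cover every triangle-free graph, in particular every $G\in\MTF(n)$, by some container $F$ from a family of size $2^{o(n^2)}$; then apply the removal lemma (Theorem~\ref{thm2}) to make each $F$ triangle-free by deleting $\ep n^2$ edges, and the Erd\H{o}s–Simonovits stability theorem (Theorem~\ref{thm3}) — using that the lower-bound construction forces $G$, hence $F$, to have close to $n^2/4$ edges for typical $G$ — to conclude that a typical container is $O(\ep n^2)$-close to bipartite. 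Fix such a near-bipartite container $F$ with bipartition classes essentially $A\cup B$; every $G$ inside $F$ inherits a cut with few non-crossing edges, so its max-cut $X\cup Y$ agrees with $A,B$ up to $O(\sqrt\ep)n$ vertices and $O(\ep)n^2$ edges.

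The heart of the argument is then a counting dichotomy on a fixed container $F$ (near-bipartite, $\le 2^{o(n^2)}$ of them). Since $G$ is \emph{maximal} triangle-free, $G$ is determined by its restriction to $X$ together with, for each vertex $y\in Y$, the set $N_G(y)\cap X$ — and maximality forces $N_G(y)\cap X$ to be a \emph{maximal} independent set in the graph $G[X\cup\{y\}]$ restricted appropriately; more usefully, once $G[X]$ is fixed, the neighborhoods of the $Y$-vertices into $X$ are maximal independent sets of $G[X]$ (subject to consistency, but an upper bound ignoring consistency suffices). Hence the number of maximal triangle-free graphs extending a fixed behavior on $X$ is at most $\MIS(G[X])^{|Y|}\le \MIS(G[X])^{n}$. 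Now split into two cases. If $G[X]$ (after discarding a $\ga n$ exceptional set) is \emph{not} an induced perfect matching, or $Y$ is not nearly independent, then I claim $G[X]$ contains at least $\Om(\ga n)$ vertex-disjoint $P_3$'s: a max-cut kills all large inner structure except that vertices of inner-degree $\ge 1$ whose configuration is not a perfect matching must contain many disjoint paths on $3$ vertices (a vertex of inner-degree $\ge 2$, or two adjacent inner edges, yields a $P_3$; greedily extract $\Om(\ga n)$ disjoint ones). By Lemma~\ref{lem-mis-p3}, $\MIS(G[X])\le 2^{|X|/2 - \Om(\ga n)}$, so the number of bad $G$ inside $F$ is at most $2^{n\cdot(|X|/2-\Om(\ga n))} = 2^{n|X|/2}\cdot 2^{-\Om(\ga n^2)}$. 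Since $|X|\le n/2 + O(\sqrt\ep)n$, this is at most $2^{n^2/8 + O(\sqrt\ep)n^2 - \Om(\ga n^2)}$, which, choosing $\ep\ll\ga^2$, is $2^{n^2/8 - \Om(\ga n^2)}$; multiplying by the $2^{o(n^2)}$ containers still gives a negligible total, beating the lower bound $2^{n^2/8}$.

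I also need to rule out the degenerate possibility that $|X|$ is much smaller than $n/2$ (so $\MIS(G[X])$ is tiny but $|Y|$ is large and the count $\MIS(G[X])^{|Y|}$ is handled differently): this is where the max-cut hypothesis and near-bipartiteness of $F$ are essential — they force $||X|-|Y||=o(n)$, since an unbalanced cut inside a graph with $\sim n^2/4$ edges would have strictly fewer than $n^2/4 - \Om(n^2)$ crossing edges, contradicting the stability bound on $F$. Finally, one must handle the "good" complementary case only to note it is not counted as bad, and assemble: the union over all containers of all bad $G$ is negligible, so for almost every $G\in\MTF(n)$ every max-cut admits the claimed cleaning. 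The main obstacle I anticipate is the bookkeeping in the counting step — precisely, extracting $\Om(\ga n)$ vertex-disjoint $P_3$'s from $G[X]$ whenever the desired partition structure fails by more than $\ga n$ vertices, and making sure the "maximality $\Rightarrow$ neighborhoods are maximal independent sets" reduction is applied to the right auxiliary graph (it should be $G[X]$ itself, or a slight modification accounting for inner edges in $Y$), so that Lemma~\ref{lem-mis-p3} applies cleanly and the exponent genuinely drops below $n^2/8$.
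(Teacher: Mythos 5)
Your overall strategy (containers plus the removal and stability lemmas to get a near-bipartite, near-balanced picture, then an $\MIS$ count via Lemma~\ref{lem-mis-p3} to force the matching-plus-independent-set structure) is the paper's strategy. But the central counting step as you state it has a genuine gap. You bound the number of extensions of a fixed $G[X]=S$ by $\MIS(S)^{|Y|}$, on the grounds that maximality forces each $N_G(y)\cap X$ to be a maximal independent set of $S$. This is false whenever $G[Y]=T$ has edges: the edge $xy$ may be unaddable not because $x$ conflicts with $N_G(y)\cap X$ inside $S$, but because $x$ and $y$ have a common neighbor $y'\in Y$ with $yy'\in E(T)$. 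So non-maximal independent sets of $S$ do occur as neighborhoods, and $\MIS(S)^{|Y|}$ is not an upper bound. The correct auxiliary graph (which you suspect exists but do not identify) is the link graph of $S\cup T$ on the complete bipartite graph between $X$ and $Y$; the paper shows (Claims~\ref{cl_MIS} and~\ref{claim_L}) that the extensions inject into maximal independent sets of the Cartesian product $S\square T$, which degenerates to $|Y|$ disjoint copies of $S$ only when $T$ is empty.

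This is not a cosmetic fix, because your case analysis relies on the flawed reduction in an essential way. Your claim that ``$Y$ not nearly independent'' forces $\Omega(\ga n)$ disjoint $P_3$'s in $G[X]$ is simply untrue, and no bound of the form $\MIS(S)^{|Y|}$ or its mirror $\MIS(T)^{|X|}$ can detect the offending configuration: if $S$ and $T$ are both perfect matchings, both of these equal $2^{|X||Y|/2}$ and give no saving, yet such graphs must be shown negligible to prove part (ii). The product structure is exactly what rescues this case: $K_2\square K_2=C_4$ contains a $P_3$, so two large matchings create $\Omega(n^2)$ disjoint $P_3$'s in $S\square T$ and Lemma~\ref{lem-mis-p3} gives $2^{|X||Y|/2-\Omega(n^2)}$. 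Two smaller repairs are also needed: (a) when the failure of the perfect-matching structure on $X$ is due to $r$ isolated vertices, $S$ may contain no $P_3$ at all, and you must instead use that a matching covering only $|X|-r$ vertices has $\MIS\le 2^{(|X|-r)/2}$; and (b) the near-balance of the max-cut cannot be deduced from ``$G$ has close to $n^2/4$ edges'' (a maximal triangle-free graph can be a star), but must be obtained by the same negligibility counting, as in the paper's Claim~\ref{cl-XY}: an unbalanced cut makes the link graph have at most $n^2/4-\Omega(n^2)$ vertices.
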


The outline of the proof is as follows. We observe that every maximal triangle-free graph $G$ on $[n]$ can be built in the following three steps. 
\begin{enumerate}
\setlength{\itemsep}{1pt}
  \setlength{\parskip}{0pt}
  \setlength{\parsep}{0pt}
\item[(S1)] Choose a max-cut $X\cup Y$ for $G$.
\item[(S2)] Choose triangle-free graphs $S$ and $T$ on the vertex sets $X$ and $Y$, respectively. 
\item[(S3)] Extend $S\cup T$ to a maximal triangle-free graph by adding edges between $X$ and $Y$. 
\end{enumerate}
We give an upper bound on the number of choices for each step. First, there are at most $2^n$ ways to fix a max-cut $X\cup Y$ in (S1). For (S2), we show (Lemma~\ref{lem-large}) that almost all maximal triangle-free graphs on $[n]$ are $o(n^2)$-close to bipartite, which implies that the number of choices for most of these graphs in (S2) is at most $2^{o(n^2)}$. For fixed $X,Y,S,T$, we bound, using Claim~\ref{cl_MIS}, the number of choices in (S3) by the number of maximal independent sets in some auxiliary link graph $L$. This enables us to use Lemma~\ref{lem-mis-p3} to force the desired structure on $S$ and $T$.

\begin{defn} [\textbf{Link graph}]
Given edge-disjoint graphs $A$ and $S$ on $[n]$, define 
the \emph{link graph} $L:=L_{S}[A]$ of $S$ on $A$ as follows:
$$
\begin{array}{lllll}
V(L):=E(A) \quad \mbox{ and }\quad
E(L):=\{a_1 a_2: \exists s\in E(S)\mbox{ such that }  \{a_1,a_2,s\} \mbox{ forms a triangle}  \}.
\end{array}
$$
\end{defn}

\begin{claim}\label{cl_triangle_free}
If $A$ and $S$ are triangle-free, then $L_S[A]$ is triangle-free.
\end{claim}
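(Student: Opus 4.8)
The plan is to argue by contradiction. Suppose $L:=L_S[A]$ contains a triangle; then its three vertices are three \emph{distinct} edges $a_1,a_2,a_3\in E(A)$ that are pairwise adjacent in $L$. By the definition of the link graph, for each pair $i\neq j$ there is an edge $s_{ij}\in E(S)$ such that the three edges $a_i,a_j,s_{ij}$ span a triangle in the host graph on $[n]$. In particular, $a_i$ and $a_j$ must share an endpoint for every pair $i\neq j$.

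First I would record the elementary fact that three edges which pairwise intersect either all pass through one common vertex or themselves form a triangle. Applying this to $a_1,a_2,a_3$: the second alternative is impossible since $A$ is triangle-free, so there is a vertex $v$ with $a_1=vx_1$, $a_2=vx_2$, $a_3=vx_3$, where $x_1,x_2,x_3$ are pairwise distinct (as the $a_i$ are distinct edges) and distinct from $v$.

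Next I would unpack what it means for $\{a_i,a_j,s_{ij}\}$ to span a triangle. Since $a_i=vx_i$ and $a_j=vx_j$ already account for two of the three edges of that triangle and $x_i\neq x_j$, the triangle must be on the vertex set $\{v,x_i,x_j\}$ and hence $s_{ij}=x_ix_j$. Therefore $x_1x_2,\,x_1x_3,\,x_2x_3\in E(S)$, so $\{x_1,x_2,x_3\}$ spans a triangle in $S$, contradicting that $S$ is triangle-free. This contradiction finishes the proof.

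The argument is short and I do not expect a real obstacle. The only points needing a little care are the case split for three pairwise-intersecting edges (common vertex versus triangle) and verifying that all the edges and vertices involved are genuinely distinct; the latter follows from $a_1,a_2,a_3$ being distinct vertices of $L$ and from $A$ and $S$ being edge-disjoint (so that each $s_{ij}$ is distinct from $a_i$ and $a_j$, making $\{a_i,a_j,s_{ij}\}$ an honest triangle).
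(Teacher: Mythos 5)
Your proof is correct and follows essentially the same route as the paper's: assume a triangle $a_1a_2a_3$ in $L_S[A]$, use triangle-freeness of $A$ to force the three edges to share a common endpoint, and then observe that the corresponding edges of $S$ form a triangle, contradicting triangle-freeness of $S$. You simply spell out the distinctness and case-split details that the paper leaves implicit.
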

\begin{proof}
Indeed, otherwise there exist $a_1,a_2,a_3\in E(A)$ and $s_1,s_2,s_3\in E(S)$ such that the $3$-sets $\{a_1,a_2,s_1\}$, $\{a_2,a_3,s_2\}$, and $\{a_1,a_3,s_3\}$ span triangles. Since $A$ is triangle-free, the edges $a_1,a_2,a_3$ share a common endpoint, and $\{s_1,s_2,s_3\}$ spans a triangle. This is a contradiction since $S$ is triangle-free. 
\end{proof}

\begin{claim}\label{cl_MIS}
Let $S$ and $A$ be two edge-disjoint triangle-free graphs on $[n]$ such that there is no triangle $\{a,s_1,s_2\}$ in $S\cup A$ with $a\in E(A)$ and $s_1, s_2\in E(S)$. Then the number of maximal triangle-free subgraphs of $S\cup A$ containing $S$ is at most $\MIS(L_S[A])$.
\end{claim}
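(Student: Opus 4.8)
The plan is to show that the assignment $H\mapsto E(H)\cap E(A)$ is a bijection from the family of maximal triangle-free subgraphs of $S\cup A$ containing $S$ onto the family of maximal independent sets of $L:=L_S[A]$; since only an upper bound is needed, an injection would already suffice, but the map turns out to be onto as well.

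First I would describe the triangles of an arbitrary subgraph $S\cup B$ with $B\subseteq E(A)$. Since $S$ and $A$ are triangle-free and, by hypothesis, $S\cup A$ contains no triangle with one edge in $E(A)$ and two edges in $E(S)$, every triangle of $S\cup B$ must use exactly two edges of $B$ and one edge of $S$. Hence, directly from the definition of $L$, the graph $S\cup B$ is triangle-free if and only if $B$ spans no edge of $L$, i.e.\ $B$ is an independent set in $L$. (This also re-proves the relevant special case of Claim~\ref{cl_triangle_free}.)

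Next I would handle maximality. Fix $B\subseteq E(A)$ with $S\cup B$ triangle-free. For any $a\in E(A)\setminus B$, a triangle created in $S\cup(B\cup\{a\})$ must contain $a$; rerunning the case analysis above on $S\cup(B\cup\{a\})$ shows that such a triangle has the form $\{a,a',s\}$ with $a'\in B$ and $s\in E(S)$, because a triangle with two $S$-edges is forbidden by hypothesis and one with three $A$-edges is forbidden since $A$ is triangle-free. Thus adding $a$ creates a triangle precisely when $a$ has a neighbour in $B$ inside $L$. Consequently $S\cup B$ is a maximal triangle-free subgraph of $S\cup A$ containing $S$ if and only if $B$ is an independent set of $L$ that dominates $V(L)\setminus B$, i.e.\ a maximal independent set of $L$. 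This gives the bijection, hence the bound (in fact an equality).

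I do not anticipate a substantive obstacle here; the only point requiring care is the precise meaning of ``maximal triangle-free subgraph of $S\cup A$ containing $S$''. I would read it as maximal within $\{H:\ S\subseteq H\subseteq S\cup A,\ H\ \text{triangle-free}\}$, which is exactly what the argument above counts. If instead one insists that $H$ be a maximal triangle-free graph on $[n]$ in the absolute sense while satisfying $S\subseteq H\subseteq S\cup A$, the resulting family is a subset of the former one, so the stated ``at most $\MIS(L_S[A])$'' bound holds in either reading.
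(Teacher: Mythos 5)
Your proposal is correct and follows essentially the same route as the paper: both map a maximal triangle-free subgraph $H$ to $E(H)\cap E(A)$ and verify, via the same case analysis of which triangles can occur in $S\cup A$, that the image is a maximal independent set of $L_S[A]$. Your additional observations (that the map is in fact a bijection, and that the bound holds under either reading of ``maximal'') are accurate but not needed for the stated inequality.
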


\begin{proof}
Let $G$ be a maximal triangle-free subgraph of $S\cup A$ that contains $S$. We show that $E(G)\cap E(A)$ spans a maximal independent set in $L:=L_S[A]$. Clearly, $E(G)\cap E(A)$ spans an independent set in $L$ because otherwise there would be a triangle in $G$. Suppose that $E(G)\cap E(A)$ is not a maximal independent set in $L$. Then there is $a_1\in E(A)- E(G)$ such that, for any two edges $a_2\in E(A)\cap E(G)$ and $s\in E(S)$, $\{a_1,a_2,s\}$ does not form a triangle. By our assumption, there is no triangle $\{a_1,a_2,a_3\}$ with $a_2,a_3\in E(A)$ and no triangle $\{a_1,s_1,s_2\}$ with $s_1,s_2\in E(S)$. Therefore, $G\cup \{a_1\}$ is triangle-free, contradicting the maximality of $G$.
\end{proof}

We fix the following parameters that will be used throughout the rest of the paper. Let $\ga,\beta,\ep,\ep'>0$ be sufficiently small constants satisfying the following hierachy:
\begin{eqnarray}\label{eq-para}
\ep'\ll\de_{2.3}(\ep)\ll\ep\ll\beta\ll\de_{2.3}(\ga^3)\ll\ga\ll 1,
\end{eqnarray}
where $\de_{2.3}(x)>0$ is the constant returned from Theorem~\ref{thm3} with input $x$. The notation $x\ll y$ above means that $x$ is a sufficiently small function of $y$ to satisfy some inequalities in the proof. In the following proof, $\de_{2.2}(x)$ is  the constant returned from Theorem~\ref{thm2} with input $x$, and in the rest of the paper, we shall always assume that $n$ is sufficiently large, even when this is not explicitly stated.

\begin{lemma}\label{lem-large}
Almost all maximal triangle-free graphs on $[n]$ are $2\ep n^2$-close to bipartite.
\end{lemma}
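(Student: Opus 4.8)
The plan is to show that all but a negligible family of maximal triangle-free graphs are $2\ep n^2$-close to bipartite, by combining the container theorem (Theorem~\ref{thm1}), the removal lemma (Theorem~\ref{thm2}), the stability theorem (Theorem~\ref{thm3}), and a counting argument that uses Lemma~\ref{lem-mis-p3}. First I would split $\MTF(n)$ into two parts according to the number of edges. The graphs with \emph{many} edges (close to $n^2/4$) will be easy: by the container theorem every maximal triangle-free graph $G$ sits inside some container $F$ with at most $\de_{2.3}(\ep)n^3$ triangles, and the number of containers is $2^{o(n^2)}$; applying Theorem~\ref{thm2} to $F$ we delete at most $\ep n^2$ edges to make it triangle-free, and then Theorem~\ref{thm3} (using that $G$, hence $F$ after deletion, has at least $n^2/4 - o(n^2)$ edges) lets us delete another $\ep n^2$ edges to make it bipartite. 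Since $G\subseteq F$, $G$ itself is $2\ep n^2$-close to bipartite. So the content is entirely in the sparse case.

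For the graphs with \emph{few} edges I would aim to bound their number by $2^{n^2/8 - cn^2}$, which is negligible since $|\MTF(n)|\ge 2^{n^2/8}$. Here I would again use a container $F$ (with few triangles, so $o(n^2)$ choices) and then follow the three-step recipe (S1)--(S3) from the outline: choose a max-cut $X\cup Y$ of $G$ ($2^n$ choices), choose triangle-free graphs $S=G[X]$ and $T=G[Y]$, and then extend by $[X,Y]$-edges. The key point is that, having fixed $F$ and the cut, the graphs $S$ and $T$ live inside $F[X]$ and $F[Y]$, and the $[X,Y]$-edges of $G$ form a maximal independent set in the link graph $L_{S\cup T}[A]$, where $A$ is the bipartite graph of $[X,Y]$-edges of $F$; so by Claim~\ref{cl_MIS} the number of extensions is at most $\MIS(L)$, and $L$ is triangle-free by Claim~\ref{cl_triangle_free}. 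Because $G$ has few edges and $X\cup Y$ is a max-cut, a counting/averaging argument shows that $A$ (a triangle-free graph on at most $n^2/4$ vertices) must contain many vertex-disjoint $P_3$'s --- intuitively, if $G$ has substantially fewer than $n^2/4$ $[X,Y]$-edges, then the bipartite complement inside $A$ is dense enough to force $\Om(n^2)$ disjoint $P_3$'s in $L$, or more directly $A$ itself is $P_3$-rich --- so Lemma~\ref{lem-mis-p3} gives $\MIS(L)\le 2^{|E(A)|/2 - \Om(n^2)} \le 2^{n^2/8 - \Om(n^2)}$. Multiplying the $2^{o(n^2)}$ choices for $F$ and the cut, and summing over the small number of graphs $S,T$ inside $F$ (which is also $2^{o(n^2)}$ since $F[X], F[Y]$ are sparse, or can be absorbed into the container bound), the sparse maximal triangle-free graphs number at most $2^{n^2/8 - \Om(n^2)}$, hence form a negligible family.

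The main obstacle, and the place requiring the most care, is the quantitative link between ``$G$ has few $[X,Y]$-edges'' and ``$L$ has many disjoint $P_3$'s.'' One cannot simply say $L$ is sparse; rather one needs that the deficiency $n^2/4 - |E(A)|$ (or $n^2/4 - |E(G)|$) being $\Om(n^2)$ forces $\Om(n^2)$ vertex-disjoint copies of $P_3$ \emph{in $L$}, so that the exponent $|E(A)|/2 - k/25$ in Lemma~\ref{lem-mis-p3} beats $n^2/8$. The cleanest route is probably to observe that $V(L)=E(A)$ and that two edges of $A$ are adjacent in $L$ whenever they complete a triangle with an edge of $S$ or $T$; using that $S$ and $T$ are themselves forced to be reasonably structured (else we are in a different negligible case) one shows $L$ has minimum degree or edge count forcing a large $P_3$-packing via a greedy argument. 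I would also need to handle the boundary regime where $|E(G)|$ is neither ``large'' nor ``small'' by choosing the threshold so that the two estimates overlap: pick the cutoff at $n^2/4 - \de n^2$ for a suitable $\de$ depending on the hierarchy~\eqref{eq-para}, so that above it Theorem~\ref{thm3} applies and below it the $P_3$-packing bound wins. Once the threshold is set consistently with~\eqref{eq-para}, the two cases together cover all of $\MTF(n)$ except a negligible family, proving the lemma.
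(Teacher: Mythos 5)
Your dense case is fine (in fact slightly simpler than the paper's: if $G$ itself has at least $n^2/4-\de n^2$ edges you can apply Theorem~\ref{thm3} to $G$ directly). The gap is in the sparse case, and it stems from splitting $\MTF(n)$ by the number of edges of $G$ rather than by the number of edges of its \emph{container}. The step you yourself flag as the obstacle --- ``few $[X,Y]$-edges in $G$ forces $\Om(n^2)$ vertex-disjoint $P_3$'s in $L$'' --- is not just delicate, it is false as stated and cannot be repaired within your framework. The edges of $L=L_{S\cup T}[A]$ are generated entirely by $S\cup T$: if $S\cup T$ is a single edge (or empty), $L$ contains no $P_3$ at all, regardless of how sparse $G$ is, and $\MIS(L)$ can be as large as $2^{e(A)/2}\approx 2^{n^2/8}$ when $A$ comes from a dense container. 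Moreover $\MIS(L)$ counts \emph{all} maximal extensions of $S\cup T$, almost all of which are dense graphs, so you cannot isolate the sparse $G$'s through this quantity; a sparse maximal triangle-free graph sitting inside a large container is simply not reachable by your two cases.

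The paper's actual argument avoids all of this and needs neither max-cuts nor $P_3$-packings for this lemma. Condition on the container $F\in\cF$ and write $F=A\cup B$ with $A$ triangle-free and $e(B)\le\ep' n^2$ via Theorem~\ref{thm2}. If $e(F)\le n^2/4-6\ep' n^2$ (``small container''), every maximal triangle-free $G\subseteq F$ is determined by a triangle-free $S\subseteq B$ (at most $2^{\ep' n^2}$ choices) and a maximal independent set in $L_S[A]$; the $k=0$ case of Lemma~\ref{lem-mis-p3} (i.e.\ the Hujter--Tuza bound) gives $\MIS(L_S[A])\le 2^{|V(L)|/2}=2^{e(A)/2}\le 2^{n^2/8-3\ep' n^2}$, because the \emph{vertex set} of the link graph is $E(A)$ and a deficit in $e(A)$ translates directly into a deficit in the exponent. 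Summing over the $2^{o(n^2)}$ containers shows small containers contribute a negligible family. If instead $e(F)\ge n^2/4-6\ep' n^2$, then $e(A)\ge n^2/4-7\ep' n^2$ and Theorem~\ref{thm3} applied to $A$ makes $F$, hence every $G\subseteq F$ (sparse or not), $2\ep n^2$-close to bipartite. This container-level dichotomy is the missing idea in your proposal; with it, the trivial $k=0$ bound suffices and the $P_3$-packing machinery is reserved for the finer structural statement of Lemma~\ref{lem-asymp}.
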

\begin{proof}
Let $\cF$ be the family of graphs obtained from Theorem~\ref{thm1} using $\de_{2.2}(\ep')$. Then every triangle-free graph on $[n]$ is a subgraph of some container $F\in\cF$.

We first show that the family of maximal triangle-free graphs in small containers is negligible. Consider a container $F\in \cF$ with $e(F)\leq n^2/4-6\ep' n^2$. Since $F$ contains at most $\de_{2.2}(\ep') n^3$ triangles, by Theorem~\ref{thm2}, we can find $A$ and $B$, subgraphs of $F$, such that $F=A\cup B$,  where $A$ is triangle-free, and $e(B)\le \ep' n^2$. For each $F\in\cF$, fix such a pair $(A,B)$. Then every maximal triangle-free graph in $F$ can be built in two steps:
\begin{enumerate}
\setlength{\itemsep}{1pt}
  \setlength{\parskip}{0pt}
  \setlength{\parsep}{0pt}
  \item[(i)] Choose a triangle-free $S\subseteq B$;
\item[(ii)] Extend $S$ in $A$ to a maximal triangle-free graph.
\end{enumerate}
The number of choices in (i) is at most $2^{e(B)}\le 2^{\ep' n^2}$. Let $L:=L_S[A]$ be the link graph of $S$ on $A$. By Claim~\ref{cl_triangle_free}, $L$ is triangle-free. Claim~\ref{cl_MIS} implies that the number of maximal triangle-free graphs in $S\cup A$ containing $S$ (i.e.~the number of extensions in (ii)) is at most $\MIS(L)$. Thus, by Lemma~\ref{lem-mis-p3},
$$ \MIS(L) \le 2^{|A|/2}\le 2^{n^2/8-3\ep' n^2}. $$
Therefore, the number of maximal triangle-free graphs in small containers is at most
$$|\cF|\cdot 2^{\ep' n^2}\cdot 2^{n^2/8-3\ep' n^2}\le 2^{n^2/8-\ep' n^2}.$$

From now on, we may consider only maximal triangle-free graphs contained in containers of size at least $n^2/4-6\ep' n^2$. Let $F$ be any large container. Recall that by Theorem~\ref{thm2}, $F=A\cup B$, where $A$ is triangle-free with $e(A)\ge n^2/4-7\ep' n^2$ and $e(B)\le \ep' n^2$. Since $\ep'\ll \de_{2.3}(\ep)$, by Theorem~\ref{thm3}, $A$ can be made bipartite by removing at most $\ep n^2$ edges. Since $\ep'\ll \ep$, $F$ can be made bipartite by removing at most $(\ep'+\ep)n^2\le 2\ep n^2$ edges. Therefore, every maximal triangle-free graphs contained in $F$ is $2\ep n^2$-close to bipartite.
\end{proof}

Fix $X,Y,S,T$ as in steps (S1) and (S2). Let $A$ be the complete bipartite graph with parts $X$ and $Y$. By Claim~\ref{cl_MIS}, the number of ways to extend $S\cup T$ in (S3) is at most $\MIS(L_{S\cup T}[A])$. The number of ways to fix $X$ and $Y$ is at most $2^n$, and by Lemma~\ref{lem-large}, the number of ways to fix $S$ and $T$ is at most $\binom{n^2}{2\ep n^2}$. It follows that if $\MIS(L_{S\cup T}[A])$ is smaller than $2^{n^2/8-cn^2}$ for some $c\gg\ep$, then the family of maximal triangle-free graphs with such $(X,Y,S,T)$ is negligible.

\begin{claim}\label{claim_L}
$L_{S\cup T}[A]=S\square T$.
\end{claim}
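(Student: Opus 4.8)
The plan is to prove the identity $L_{S\cup T}[A]=S\square T$ by checking directly that the two graphs have the same vertex set and the same edge set, unwinding both definitions.

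First I would set up the vertex identification. Since $S$ is a graph on $X$, $T$ is a graph on $Y$, and $A$ is the complete bipartite graph with parts $X$ and $Y$, the graphs $S\cup T$ and $A$ are edge-disjoint, so $L:=L_{S\cup T}[A]$ is well-defined, with $V(L)=E(A)$. Every edge of $A$ is a pair $xy$ with $x\in X$ and $y\in Y$, so identifying the edge $xy$ with the ordered pair $(x,y)\in X\times Y=V(S)\times V(T)$ gives the natural bijection between $V(L)$ and $V(S\square T)$.

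Next I would compute the edges of $L$. Two distinct vertices $a_1=x_1y_1$ and $a_2=x_2y_2$ of $L$ are adjacent exactly when some $s\in E(S\cup T)$ makes $\{a_1,a_2,s\}$ a triangle. A triangle spans only three vertices, and $a_1\ne a_2$, so $a_1$ and $a_2$ must share exactly one endpoint (sharing none forces four distinct vertices; sharing both makes $a_1=a_2$). If that common endpoint is $x:=x_1=x_2\in X$ (so $y_1\ne y_2$), the triangle can only be on $\{x,y_1,y_2\}$, forcing $s=y_1y_2$; since $s\in E(S\cup T)$ and $y_1,y_2\in Y$ while $S$ lives on $X$, this is equivalent to $y_1y_2\in E(T)$. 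Symmetrically, if the common endpoint is $y:=y_1=y_2\in Y$, adjacency in $L$ is equivalent to $x_1x_2\in E(S)$. In particular $s$ can never be an $[X,Y]$-edge, because $S\cup T$ has no such edges.

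Finally I would match this with the Cartesian product: under the identification $a_i\leftrightarrow(x_i,y_i)$, the two conditions just derived say exactly that $(x_1,y_1)$ and $(x_2,y_2)$ are adjacent in $S\square T$ iff ($x_1=x_2$ and $y_1y_2\in E(T)$) or ($y_1=y_2$ and $x_1x_2\in E(S)$), which is the defining adjacency rule of $S\square T$. Hence $L=S\square T$. I do not expect a genuine obstacle here; the only thing to be careful about is the degenerate case analysis — ruling out $a_1,a_2$ sharing zero or two endpoints and ruling out $s$ being an $[X,Y]$-edge — all of which is immediate from "a triangle has three vertices" and "$S\cup T$ has no $[X,Y]$-edge."
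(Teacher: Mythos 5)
Your proof is correct and follows essentially the same route as the paper's: identify $V(L_{S\cup T}[A])=E(A)$ with $X\times Y=V(S\square T)$ and then unwind both adjacency definitions to see they coincide. Your version is slightly more careful in explicitly ruling out the degenerate cases (the two edges of $A$ sharing zero or two endpoints, and $s$ being an $[X,Y]$-edge), which the paper leaves implicit.
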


\begin{proof}
Note that $V(L_{S\cup T}[A])=E(A)=\{(x,y):x\in X, y\in Y\}=V(S\square T)$.
 Using the definition of the 
Cartesian product, $(x,y)$ and $(x',y')$ are adjacent in $S\square T$ if and only if $x=x'$ and $\{y,y'\}\in E(T)$, or $y=y'$ and $\{x,x'\}\in E(S)$, i.e. if and only if $\{x=x',y,y'\}$ or $\{x,x',y=y'\}$ form a triangle in $S\cup A$. But by the definition of $L_{S\cup T}[A]$, this is exactly when $(x,y)$ and $(x',y')$ are adjacent in $L_{S\cup T}[A]$. 
\end{proof}

Claim~\ref{claim_L} allows us to rule out certain structures of $S$ and $T$ since, by Lemma~\ref{lem-mis-p3}, if $S\square T$ has many vertex disjoint $P_3$'s then the number of maximal-triangle free graphs with $S=G[X]$ and $T=G[Y]$ is much smaller than $2^{n^2/8}$.

\begin{claim} \label{cl-XY}
For almost all maximal triangle-free $n$-vertex graphs $G$ with a max-cut $X\cup Y$, 

(i) $|X|, |Y|\geq n/2-\beta n$, and 

(ii) $\De(X), \De(Y)\le \beta n$. 
\end{claim}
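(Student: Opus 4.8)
The plan is to run the negligibility argument set up just above the claim. Recall that every maximal triangle-free $G$ on $[n]$ arises by (S1) choosing a max-cut $X\cup Y$, (S2) choosing triangle-free graphs $S=G[X]$ and $T=G[Y]$, and (S3) extending $S\cup T$ across $[X,Y]$; by Claims~\ref{cl_MIS} and~\ref{claim_L} the number of choices in (S3) is at most $\MIS(S\square T)$, there are at most $2^n$ choices in (S1), and, after discarding via Lemma~\ref{lem-large} the negligible family of $G$ that are not $2\ep n^2$-close to bipartite, at most $\binom{n^2}{2\ep n^2}$ choices in (S2). Since $|\MTF(n)|\ge 2^{n^2/8}$, it therefore suffices to show that whenever (i) or (ii) fails one has $\MIS(S\square T)\le 2^{n^2/8-cn^2}$ for some constant $c$ with $\ep\ll c\ll 1$ (in one exceptional sub-case of (ii) this will have to be replaced by a direct count of the relevant quadruples $(X,Y,S,T)$).

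For (i): assume $|X|<n/2-\beta n$, the case $|Y|<n/2-\beta n$ being symmetric. Then $S\square T$ has $|X||Y|=|X|(n-|X|)\le n^2/4-\beta^2 n^2$ vertices, so Lemma~\ref{lem-mis-p3} with $k=0$ gives $\MIS(S\square T)\le 2^{n^2/8-\beta^2 n^2/2}$, which is of the required form since $\ep\ll\beta$.

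For (ii): assume $\Delta(X)>\beta n$, the case $\Delta(Y)>\beta n$ being symmetric, and (having already dealt with graphs violating (i)) assume also $|X|,|Y|\ge n/3$. Pick $v\in X$ with $I:=N_S(v)$, $|I|>\beta n$; as $S$ is triangle-free, $I$ is independent in $S$. Write $p_3(H)$ for the largest number of vertex-disjoint $P_3$'s in a graph $H$. I would first note that, because $I$ is independent in $S$, the columns $C_u:=\{u\}\times Y$ of $S\square T$ over $u\in I$ are pairwise non-adjacent, yet each induces a copy of $T$; hence these $|I|$ disjoint copies of $T$ contribute at least $|I|\cdot p_3(T)\ge\beta n\cdot p_3(T)$ vertex-disjoint $P_3$'s to $S\square T$, and symmetrically (rows induce copies of $S$) at least $|Y|\cdot p_3(S)$ of them. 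Consequently, if $\max\{p_3(S),p_3(T)\}\ge\ga n$, then $S\square T$ has at least $\tfrac12\beta\ga n^2$ vertex-disjoint $P_3$'s and Lemma~\ref{lem-mis-p3} yields $\MIS(S\square T)\le 2^{n^2/8-\beta\ga n^2/50}$, which suffices since $\ep\ll\beta\ll\ga$.

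The remaining case $p_3(S),p_3(T)<\ga n$ together with $\Delta(X)>\beta n$ is the main obstacle. Deleting the vertices of a maximal packing of disjoint $P_3$'s makes a graph $P_3$-free, hence a matching, so here $S$ and $T$ are each a matching outside a set of fewer than $3\ga n$ vertices, while $S$ still has a vertex of inner degree $>\beta n$; crucially, a single vertex of large inner degree only lowers $\MIS(S\square T)$ by $\Theta(n)$ in the exponent (e.g.\ $\MIS(S\square T)=2^{n^2/8-\Theta(n)}$ when $S$ is a star plus isolated vertices and $T$ is a perfect matching), so the crude $P_3$-count fails here. To close this case I would exploit maximality of $G$ together with the max-cut property: the latter forces $d_Y(v)\ge d_X(v)>\beta n$, triangle-freeness makes $N_S(v)$ and $N_Y(v)$ independent with no edges between them, and maximality forces every $w\in X$ with $w\notin N_S(v)\cup\{v\}$ to have an $S$-neighbor in $I$ or a $G$-edge into $N_Y(v)$. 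Pushing these constraints against the near-matching structure of $S$ and $T$ should either contradict maximality outright or show that the surviving quadruples $(X,Y,S,T)$ are cheap enough to encode (each of $S,T$ being a matching outside $O(\ga n)$ vertices) that, combined with $\MIS(S\square T)<2^{n^2/8}$, the corresponding family of $G$ is negligible. Making this last step precise is the hard part of the proof.
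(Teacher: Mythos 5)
Part (i) of your argument is correct and is exactly the paper's proof: $|X||Y|\le n^2/4-\beta^2n^2$, so $\MIS(S\square T)\le 2^{|X||Y|/2}\le 2^{n^2/8-\beta^2n^2/2}$, and the standard accounting over the at most $2^n\binom{n^2}{2\ep n^2}$ choices of $(X,Y,S,T)$ makes this family negligible.

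Part (ii) has a genuine gap, and you concede as much: after splitting on whether $\max\{p_3(S),p_3(T)\}\ge\ga n$, the remaining case ($S,T$ nearly matchings but $\Delta(X)>\beta n$) is left as a plan rather than a proof, and your own example (a star plus a matching) shows that no bound on $\MIS(S\square T)$ itself can close it, since that quantity really can be $2^{n^2/8-\Theta(n)}$. The missing idea is not to work with the full complete bipartite graph $A$ at all. You already record the two relevant facts --- the max-cut property forces $|N_Y(v)|\ge|N_X(v)|\ge\beta n$, and triangle-freeness forbids every $[X,Y]$-edge of $G$ between $N_X(v)$ and $N_Y(v)$ --- but the right use of them is to delete the entire $|N_X(v)|\times|N_Y(v)|$ rectangle of pairs from $A$, obtaining $A'\subseteq A$ with $e(A')\le|X||Y|-\beta^2n^2\le n^2/4-\beta^2n^2$. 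All cross edges of $G$ lie in $A'$, so Claim~\ref{cl_MIS} bounds the number of extensions in (S3) by $\MIS(L_{S\cup T}[A'])$, a triangle-free graph on $e(A')$ vertices, and the trivial $k=0$ case of Lemma~\ref{lem-mis-p3} gives $\MIS(L')\le 2^{n^2/8-\beta^2n^2/2}$. This disposes of (ii) in one stroke, with no case analysis on $p_3(S)$, $p_3(T)$ and no appeal to maximality beyond Claim~\ref{cl_MIS}; the savings come from shrinking the vertex set of the link graph, not from finding $P_3$'s in it.
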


\begin{proof}

Let $G$ be a maximal triangle-free graph with a max-cut $X\cup Y$. By Lemma~\ref{lem-large}, almost all maximal triangle-free graphs are $2\ep n^2$-close to bipartite, which implies that the number of choices for $G[X]$ and $G[Y]$ is at most ${n^2\choose 2\ep n^2}$. Denote by $A$ the complete bipartite graph with partite sets $X$ and $Y$.

For (i), suppose that $|X|\leq n/2-\beta n$. Then $|X||Y|\leq n^2/4-\beta^2 n^2$, and for any fixed $S$ on $X$ and $T$ on $Y$, Lemma~\ref{lem-mis-p3} implies $\MIS(L_{S\cup T}[A])\leq 2^{n^2/8-\beta^2 n^2/2}$. Since $\beta\gg \ep$, it follows from the discussion before Claim~\ref{claim_L} that the family of maximal triangle-free graphs with such max-cut $X\cup Y$ is negligible.

For (ii), suppose that $G$ has a vertex $x\in X$ of inner degree at least $\beta n$. Since $X\cup Y$ is a max-cut, $|N_Y(x)|\ge|N_X(x)|\ge\beta n$. Since $G$ is triangle-free, there is no edge in between $N_X(x)$ and $N_Y(x)$. Let $A'\subseteq A$ be a graph formed by deleting all edges between $N_X(x)$ and $N_Y(y)$ from $A$. Define a link graph $L':=L_{S\cup T}[A']$ of $S\cup T$ on $A'$. In this case, the number of choices for (S3) is at most $\MIS(L')$. Since  $L'$ is triangle-free (Claim~\ref{cl_triangle_free}) and $|L'|=e(A')\leq |X| |Y|-|N_X(x)||N_Y(x)|\leq \frac{n^2}{4}-\beta^2 n^2$, it follows from Lemma~\ref{lem-mis-p3} that $$\MIS(L')\le 2^{|L'|/2}\le 2^{n^2/8-\beta^2n^2/2}.$$
\end{proof}

\begin{proof}[Proof of Lemma~\ref{lem-asymp}]
First, we show that for almost every maximal triangle-free graph $G$ on $[n]$ with max-cut $X\cup Y$ and with $G[X]=S$ and $G[Y]=T$, there are very few vertex-disjoint $P_3$'s in $S\cup T$. Suppose that there exist $\beta n$ vertex-disjoint $P_3$'s in $S$ or in $T$, say in $S$. Since $L_{S\cup T}[A]=S\square T$ by Claim~\ref{claim_L}, and for each of the $\beta n$ vertex-disjoint $P_3$'s in $S$ we obtain $|T|$ vertex-disjoint $P_3$'s in $S\square T$, the number of vertex-disjoint $P_3$'s in $L_{S\cup T}[A]$ is at least $\beta n|T|=\beta n|Y|$. By Claim~\ref{cl-XY}(i), $\beta n |Y|\geq \beta n (n/2-\beta n)\ge \beta n^2/3$. Then by Lemma~\ref{lem-mis-p3},
$$\MIS(L_{S\cup T}[A])\leq 2^{|S\square T|/2-\beta n^2/75}\leq2^{n^2/8-\beta n^2/75}.$$
Since $\beta\gg \ep$, the family of maximal triangle-free graphs with such $(X,Y,S,T)$ is negligible. Hence, for almost every maximal triangle-free graph $G$ with some $(X,Y,S,T)$, we can find some induced subgraphs $S'\subseteq S$ and $T'\subseteq T$ with $|S'|\le 3\beta n$ and $|T'|\le 3\beta n$ such that both $S-S'$ and $T-T'$ are $P_3$-free. This implies that each of $S-S'$ and $T-T'$ is a union of a matching and an independent set. 

\begin{figure*}[t!]
    \centering
    \begin{subfigure}[t]{0.4\textwidth}
        \centering
        \includegraphics{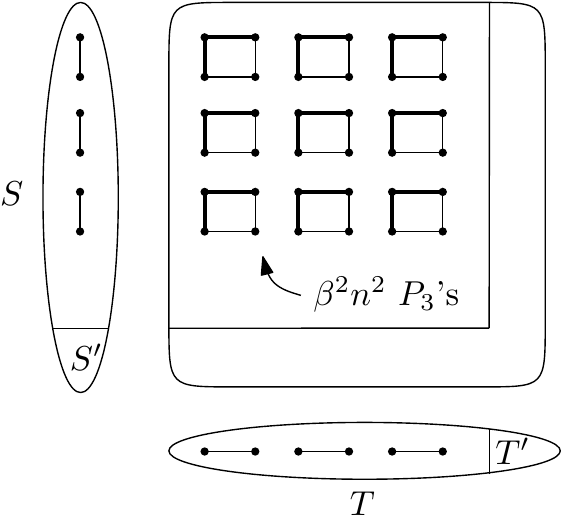}
        \caption{}\label{fig2a}
    \end{subfigure}%
    \hspace{2cm}
        \begin{subfigure}[t]{0.4\textwidth}
        \centering
        \includegraphics{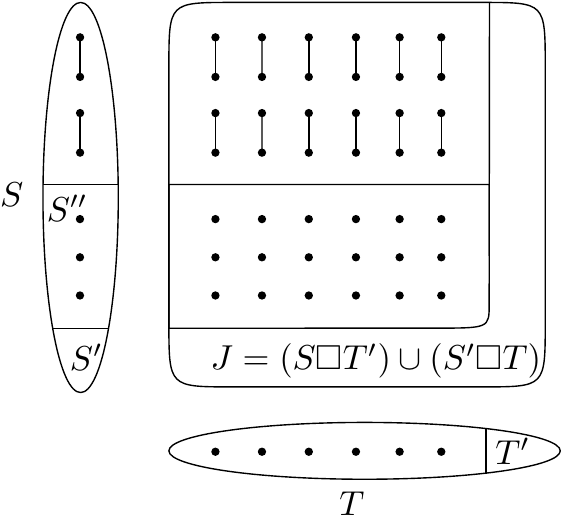}
        \caption{}\label{fig2b}
    \end{subfigure}
   \caption{Forbidden structures in $S$ and $T$.}
\end{figure*}

Next, we show that at most one of the graphs $S$ and $T$ can have a large matching. Suppose both $S$ and $T$ have a matching of size at least $\beta n$, then there are at least $\beta^2 n^2$ vertex-disjoint $C_4$'s in $S\square T$, each of which contains a copy of $P_3$ (see Figure~\ref{fig2a}). It follows that the family of such graphs is negligible since $\MIS(L_{S\cup T}[A])\le 2^{n^2/8-\beta^2n^2/25}$ and $\beta\gg \ep$. Hence, we can assume that all but $2 \beta n$ vertices in $T$ form an independent set. Redefine $T'$ so that $|T'|\le 2\beta n$ and $V(T-T')$ is an independent set.
%

Lastly, we show that there are very few isolated vertices in the graph $S-S'$. Suppose that there are $\ga n/2$ isolated vertices in $S-S'$, spanning a subgraph $S''$ of $S$. We count $\MIS(S\square T)$ as follows.
Let $J:=(S\square T') \cup (S'\square T)$ and $L':=S\square T- J$. Every maximal independent set in $S\square T$ can be built by 
\begin{enumerate}
\setlength{\itemsep}{1pt}
  \setlength{\parskip}{0pt}
  \setlength{\parsep}{0pt}
\item[(i)] choosing an independent set in $J$, and 
\item[(ii)] extending it to a maximal independent set in $L'$. 
\end{enumerate}
Since $|J|\leq |S'||T|+|T'||S|\leq 3\beta n \cdot n+ 2\beta n \cdot n=5\beta n^2$, there are at most $2^{|J|}=2^{5\beta n^2}$ choices for (i). Note that $L'$ consists of isolated vertices from $S''\square(T- T')$ and an induced matching from $(S-S'-S'')\square (T-T')$ (see Figure~\ref{fig2b}). Thus the number of extensions in (ii) is at most $\MIS((S-S'-S'')\square (T-T'))$. The graph $(S-S'-S'')\square (T-T')$ is a perfect matching with 
$$\frac{1}{2}|S-S'-S''||T-T'|\leq \frac{1}{2}|S-S''||T|\leq\frac{1}{2}\left(|S|-\frac{\ga n}{2}\right)(n-|S|) \leq \frac{1}{2}\left(\frac{n}{2}-\frac{\ga n}{4}\right)^2\le \frac{n^2}{8}-\frac{\ga n^2}{16}$$ 
edges, and so choosing one vertex for each matching edge gives at most $2^{n^2/8-\ga n^2/16}$ maximal independent sets. Since $\beta\ll\ga$, it follows that $\MIS(S\square T)\leq 2^{5\beta n^2}\cdot 2^{n^2/8-\ga n^2/16}
\le 2^{n^2/8-\ga n^2/17}$. Thus, such family of maximal triangle-free graphs is negligible, and we may assume that $|S''|\le\ga n/2$.

 The statement of Lemma~\ref{lem-asymp} follows by setting $X':=V(S'\cup S'')$ and $Y':=V(T')$. Indeed, $|X'|\leq 3\beta n+ \gamma n/2\le \gamma n$, $|Y'|\leq 2\beta n\le \gamma n$, $G[X-X']=S-S'-S''$ is a perfect matching, and $Y-Y'=V(T)-V(T')$ is an independent set.

\end{proof}

\section{Proof of Theorem~\ref{thm_main}}\label{s_Proof}
For the proof of Theorem~\ref{thm_main}, we need to introduce several classes of graphs on the vertex set $V=[n]$. Recall the hierarchy of parameters fixed in Section~\ref{s_Asymptotic}:
\begin{eqnarray}
\ep'\ll\de_{2.3}(\ep)\ll\ep\ll\beta\ll\de_{2.3}(\ga^3)\ll\ga\ll 1,
\end{eqnarray}

\begin{defn}
Fix a vertex partition $V=X\cup Y$, a perfect matching $M$ on the vertex set $X$ (in case $|X|$ is odd, $M$ is an almost perfect matching covering all but one vertex of $X$), and non-negative integers $r$, $s$ and $t$. 

\medskip

\noindent 1. Denote by $\cB(X,Y,M,s,t)$ the class of maximal triangle-free graphs $G$ with max-cut $X\cup Y$ satisfying the following three conditions:

(i) The subgraph $G[X]$ has a maximum matching $M'\subseteq M$ covering all but at most $\ga n$ vertices in $X$; 

(ii) The size of a largest family of vertex-disjoint $P_3$'s in $S:=G[X]$ is $s$;

(iii) The size of a maximum matching in $T:=G[Y]$ is $t$.

\medskip

\noindent 2. Denote by $\cB(X,Y,M, r)\subseteq \cB(X,Y,M,0,0)$ the subclass consisting of all graphs in $\cB(X,Y,M,0,0)$ with exactly $r$ isolated vertices in $G[X]$.

\medskip

\noindent 3. When $|X|$ is even, denote by $\cG(X,Y,M)$ the class of all maximal triangle-free graphs $G$ with max-cut $X\cup Y$,  $G[X]=M$, and $Y$ an independent set. 

\medskip

\noindent 4. When $|X|$ is even, denote by $\cH(X,Y,M)$ the class of maximal triangle-free graphs $G$ that are constructed as follows: 

(P1) Add $M$ to $X$; 

(P2) For every edge $x_1x_2\in M$ and every vertex $y\in Y$, add either the edge $x_1y$ or $x_2y$;

(P3) Extend each of the $2^{|X||Y|/2}$ resulting graphs to a maximal triangle-free graph by adding edges in $X$ and/or $Y$.
\end{defn}

By Lemmas~\ref{lem-asymp},~\ref{lem-large} and Claim~\ref{cl-XY}, throughout the rest of the proof, we may only consider maximal triangle-free graphs in $\bigcup_{X,Y,M,s,t}\cB(X,Y,M,s,t)$ that are $\beta n^2$-close to bipartite, $|X|,|Y|\ge n/2-\beta n$ and $\De(X),\De(Y)\le \beta n$. We may further assume from the proof of Lemma~\ref{lem-asymp} that $s,t\le \beta n$.

Notice that graphs from $\cG(X,Y,M)=\cB(X,Y,M,0)$ are precisely those with the desired structure. We will show that the number of graphs without the desired structure is exponentially smaller. The set of ``bad'' graphs consists of the following two types:

(i) when $|X|$ is even, $\bigcup_{s,t}\cB(X,Y,M,s,t)\setm \cB(X,Y,M,0)$;

(ii) when $|X|$ is odd, $\bigcup_{s,t}\cB(X,Y,M,s,t)$.

Fix an arbitrary choice of $(X,Y,M)$. For simplicity, let $\cB(s,t):=\cB(X,Y,M,s,t)$ and $\cB(r):=\cB(X,Y,M,r)$. Let $A$ be the complete bipartite graph with parts $X$ and $Y$. 

\begin{figure*}[t!]
    \centering
    \begin{subfigure}[t]{0.43\textwidth}
        \centering
        \includegraphics{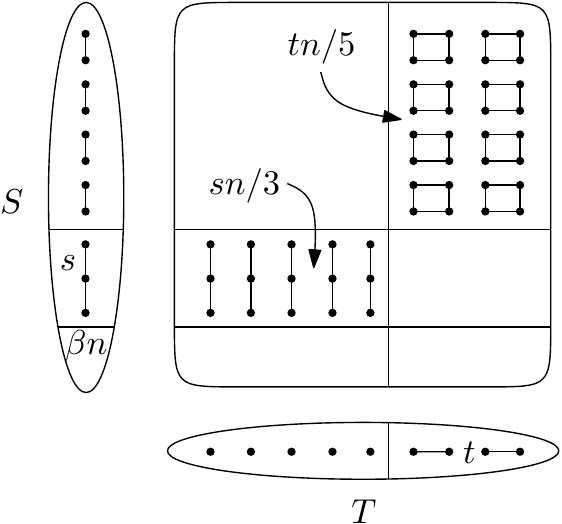}
        \caption{The number of vertex-disjoint $P_3$'s in $S\square T$ is at least $sn/3+tn/5$ (Lemma~\ref{claim-st}).}\label{fig3a}
    \end{subfigure}
    \hspace{0.1\textwidth}
        \begin{subfigure}[t]{0.43\textwidth}
        \centering
        \includegraphics{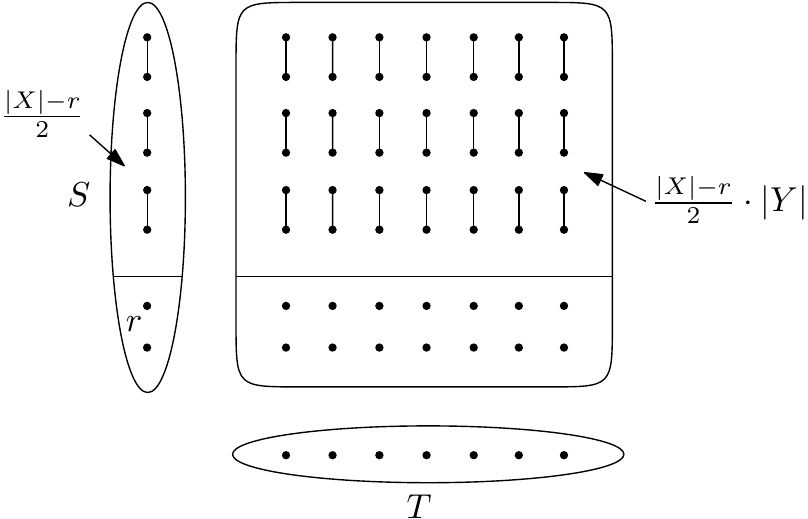}
    \caption{$\MIS(S\square T)\le 2^{(|X|-r)|Y|/2}$ if $s=t=0$ and $X$ has $r$ isolated vertices  (Lemma~\ref{claim-r}).} \label{fig3b}
    \end{subfigure}
   \caption{}
\end{figure*}

\begin{lemma}\label{claim-st}
If $s+t\geq 1$, then $|\cB(s,t)|\le 2^{|X||Y|/2-n/200}.$
\end{lemma}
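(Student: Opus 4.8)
We have $|X|, |Y| \ge n/2 - \beta n$, so both parts have size $\Theta(n)$. The key structural input is Claim~\ref{claim_L}, which tells us $L_{S\cup T}[A] = S\square T$, and Claim~\ref{cl_MIS}, which bounds the number of extensions in (S3) — i.e. the number of maximal triangle-free graphs in $\cB(s,t)$ with the fixed data $(X,Y,M,S,T)$ — by $\MIS(S\square T)$. So it suffices to show that whenever $S$ has $s$ vertex-disjoint $P_3$'s and $T$ has a matching of size $t$ with $s+t\ge 1$, the Cartesian product $S\square T$ contains at least roughly $n/8$ vertex-disjoint $P_3$'s; Lemma~\ref{lem-mis-p3} then gives $\MIS(S\square T)\le 2^{|X||Y|/2 - (n/8)/25}$, which is comfortably below $2^{|X||Y|/2 - n/200}$. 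The catch is that we must also account for the number of choices of $S$ and $T$ themselves; since we are inside the regime where $G$ is $2\ep n^2$-close to bipartite (Lemma~\ref{lem-large}), there are at most $\binom{n^2}{2\ep n^2} = 2^{o(n^2)}$ such choices, and since the $n/200$ saving is linear while the loss from choosing $S,T$ is only $2^{o(n^2)}$... wait, that does not immediately work. Let me reconsider: $\binom{n^2}{2\ep n^2}$ is $2^{\Theta(\ep n^2 \log(1/\ep))}$, which is \emph{larger} than $2^{n/200}$. So the linear saving alone is not enough to beat the count of $(S,T)$ pairs.

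The resolution is that we do not need $|\cB(s,t)|$ to be negligible for \emph{each} $(S,T)$ — the statement $|\cB(s,t)|\le 2^{|X||Y|/2 - n/200}$ is a bound \emph{per fixed triple} $(X,Y,M)$, but summed over $S,T$. So actually I should reread: $\cB(s,t) = \cB(X,Y,M,s,t)$ for the fixed $(X,Y,M)$, and within it $S = G[X]$, $T = G[Y]$ vary. Hmm, but then the $2^{o(n^2)}$ factor from choosing $(S,T)$ would swamp the $2^{-n/200}$. So the right reading must be that the saving in $\MIS(S\square T)$ is actually \emph{quadratic}, not linear: when $s\ge 1$, each $P_3$ in $S$ together with \emph{all} of $V(T)$ gives $|T| = \Theta(n)$ vertex-disjoint $P_3$'s in $S\square T$ (as in the proof of Lemma~\ref{lem-asymp}), and similarly a matching edge in $T$ with all of $V(S)$ gives $\Theta(n)$ more. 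So $S\square T$ has $\ge \max(s,t)\cdot \Theta(n) \ge \Theta(n)$ vertex-disjoint $P_3$'s — but we can do better: taking $s$ disjoint $P_3$'s in $S$ times the whole of $T$ gives $\ge s\cdot(|T|/3)$ disjoint $P_3$'s (rows), and $t$ matching edges in $T$ times the whole of $S$ gives $\ge t\cdot(|S|/5)$ (columns, via $C_5$'s or $P_3$'s), and these can be chosen disjoint, matching Figure~\ref{fig3a}. Thus the number of disjoint $P_3$'s is $\ge sn/3 + tn/5 \ge n/5$ when $s+t\ge 1$. Then $\MIS(S\square T)\le 2^{|X||Y|/2 - n/125}$; multiplying by $\binom{n^2}{2\ep n^2}$ for the $(S,T)$ choices still leaves $2^{|X||Y|/2 + O(\ep n^2\log(1/\ep))}$, which exceeds the claimed bound.

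Therefore the correct accounting must be: the number of pairs $(S,T)$ with $S$ having \emph{exactly} $s$ disjoint $P_3$'s and $T$ matching number \emph{exactly} $t$, together with $s,t\le\beta n$ and $\Delta(X),\Delta(Y)\le\beta n$, is small enough — specifically, $S$ restricted to $X$ minus the $O(\beta n)$ vertices covered by the extremal $P_3$-packing is $P_3$-free, hence a matching plus independent set, so $S$ is determined by a set of $O(\beta n)$ special vertices and $O(n)$ bits, giving only $2^{O(\beta n\log n)}$ choices for $S$ and likewise for $T$. Since $\beta n\log n = o(n^2)$ but also we want it below, say, $n/400$ — which fails. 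I think the honest statement is that Lemma~\ref{lem-mis-p3} gives a saving of $(sn/3+tn/5)/25$ in the exponent of $\MIS(S\square T)$, and one then argues that the total contribution $\sum_{S,T}\MIS(S\square T)$ over $\cB(s,t)$ is at most (number of $(S,T)$) $\times$ $2^{|X||Y|/2-(sn/3+tn/5)/25}$; the proof must establish that the number of relevant $(S,T)$ is at most $2^{(sn/3+tn/5)/25 - n/200}$ by the structure theorem (each is a matching/independent-set split off by $O(s+t)$ vertices, so at most $\binom{n}{O(s)}\binom{n}{O(t)}2^{O(n)}\cdot 2^{O(n)}$ — actually polynomially-in-the-structure many, i.e. $2^{O((s+t)\log n)}$, plus the $2^{O(n)}$ for the matching/independent bits and the bipartite edges which is subsumed). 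Getting these two competing bounds to line up with the precise constant $1/200$ is the main obstacle, and it will require being careful that the edges \emph{between} $X$ and $Y$ contribute only to $\MIS(S\square T)$ (already counted) and not an extra free factor; once the bookkeeping is set up the inequalities are routine given the hierarchy $\ep'\ll\ep\ll\beta\ll\ga$.

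\medskip

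To summarize the steps I would carry out, in order: (1) fix $(X,Y,M)$ and a pair $(S,T)$ realizing $\cB(s,t)$, and use Claims~\ref{cl_MIS} and~\ref{claim_L} to bound the number of completions to a maximal triangle-free graph by $\MIS(S\square T)$; (2) exhibit, as in Figure~\ref{fig3a}, a family of at least $sn/3 + tn/5$ vertex-disjoint $P_3$'s in $S\square T$ — the $s$ copies of $P_3$ in $S$ crossed with $V(T)$ give $\ge s|T|/3 \ge sn/3$ row-$P_3$'s, the $t$ matching edges of $T$ crossed with $V(S)$ give $\ge t|S|/5 \ge tn/5$ column structures, and these are vertex-disjoint after a small adjustment; (3) apply Lemma~\ref{lem-mis-p3} to get $\MIS(S\square T)\le 2^{|X||Y|/2 - (sn/3+tn/5)/25}$; (4) bound the number of choices of $(S,T)$ inside $\cB(s,t)$: removing $O(s)$ vertices from $X$ makes $S$ $P_3$-free hence a matching plus independent set, so $S$ is specified by choosing the $O(s)$ special vertices and then $O(n)$ further bits, i.e. at most $2^{O((s+t)\log n + n)}$ — wait, this $2^{O(n)}$ is still too big against $2^{-n/200}$ unless it is absorbed; in fact the $2^{O(n)}$ is \emph{not} extra, because $S\square T$ already encodes which maximal independent set is chosen, so $S,T$ are free and the relevant count of $(S,T)$ is just $2^{O((s+t)\log n)}$ from choosing the structure skeleton, the matching/independent-set pattern being part of what $\MIS$ counts; thus $|\cB(s,t)|\le 2^{O((s+t)\log n)}\cdot 2^{|X||Y|/2 - (sn/3+tn/5)/25} \le 2^{|X||Y|/2 - n/200}$ since $s+t\ge 1$ and $n$ is large. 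The delicate point, which I flag as the main obstacle, is precisely this last step: cleanly separating ``what is already counted by $\MIS(S\square T)$'' from ``what must be paid for by choosing $S,T$'', so that no factor of $2^{cn^2}$ or even $2^{cn}$ with $c$ larger than $1/200$ creeps in; I expect one handles it by noting the completion in (S3) determines $G$ entirely given $(X,Y,S,T)$, and the number of \emph{distinct} $G$'s with a \emph{fixed} $s,t$ structure but varying $S,T$ is dominated by $\sum_{S,T}\MIS(S\square T)$ where the sum has only $2^{o(n)}$ effective terms after accounting for the forced matching-plus-independent-set structure of $S-S'$ and $T-T'$.
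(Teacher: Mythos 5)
Your steps (1)--(3) coincide with the paper's argument: for a fixed pair $(S,T)$ the number of completions is at most $\MIS(S\square T)$; the product contains at least $s(|T|-2t)\ge sn/3$ vertex-disjoint $P_3$'s coming from the $P_3$'s of $S$ and at least $\tfrac12(|S|-\beta n-3s)\,t\ge tn/5$ more coming from the $C_4$'s formed by matching edges of $S$ and of $T$; and Lemma~\ref{lem-mis-p3} then gives $\MIS(S\square T)\le 2^{|X||Y|/2-(s+t)n/125}$.

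The gap is in your step (4), which you yourself flag as the main obstacle and then resolve incorrectly. Your final claim is that the number of admissible pairs $(S,T)$ is $2^{O((s+t)\log n)}$ because ``the matching/independent-set pattern [is] part of what $\MIS$ counts.'' It is not: $\MIS(S\square T)$ is evaluated for a \emph{fixed} pair $(S,T)$ and enumerates only the possible sets of $[X,Y]$-edges, so every inner edge of $S$ and $T$ is separate data that must be paid for. The correct accounting is: outside the $3s$ vertices of a maximum $P_3$-packing, $S$ is $P_3$-free, hence a sub-matching of the fixed $M$ together with isolated vertices, and outside the $2t$ matched vertices $T$ is independent; thus the only genuinely free inner edges are those incident to the $3s+2t$ special vertices. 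By Claim~\ref{cl-XY} each such vertex has inner degree at most $\beta n$, so specifying these edges costs at most $\binom{n}{\beta n}^{3s+2t}\le 2^{3\beta\log(e/\beta)(s+t)n}$ --- a factor exponential in $(s+t)n$, not polynomial in $n$. The lemma closes only because this exponent $3\beta\log(e/\beta)(s+t)n$ is dominated by the saving $(s+t)n/125$ from Lemma~\ref{lem-mis-p3}, which holds since $\beta\ll1$ in the hierarchy \eqref{eq-para}: both the cost of specifying $(S,T)$ and the gain in $\MIS$ scale linearly with $(s+t)n$, and the whole point is a comparison of the two constants. Your write-up never invokes the degree bound $\Delta(X),\Delta(Y)\le\beta n$, without which the number of pairs $(S,T)$ cannot be controlled, and your proposed count $2^{O((s+t)\log n)}$ is off by an exponential factor.
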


\begin{proof}
Let $s$ and $t$ be two non-negative integers, at least one of which is nonzero. We first bound the number of ways to choose $S$ and $T$, i.e.~the number of ways to add inner edges. The number of ways to choose the vertex set of the $s$ vertex-disjoint $P_3$'s in $S$ and the $t$ matching edges in $T$ is at most $\binom{n}{3s}\binom{n}{2t}$. Since $\De(X),\De(Y)\le\beta n$, each of the $3s+2t$ chosen vertices has inner degree at most $\beta  n$. Therefore, the number of ways to choose their inner neighbors is at most
$$\binom{n}{\beta  n}^{3s+2t}\leq \left( \left(\frac{e n}{\beta  n}\right)^{\beta  n}\right)^{3s+2t}\leq 2^{\beta \log(e/\beta )\cdot (3s+2t)n}.$$
 
 The number of ways to add the $[X,Y]$-edges is $\MIS(L_{S\cup T}(A))$. We claim that the link graph $L:=L_{S\cup T}(A)=S\square T$ has at least $(s+t)n/5$ vertex-disjoint $P_3$'s. Indeed, recall that $|S|=|T|\ge n/2-\beta n$ and $s,t\le\beta n$, thus in $S\square T$ (see Figure~\ref{fig3a}), we have at least $s(|T|-2t)\geq sn/3$ vertex-disjoint $P_3$'s coming from $s$ vertex-disjoint $P_3$'s in $S$ and at least $\frac{1}{2}(|S|-\beta n-3s)\cdot t \geq tn/5$  vertex-disjoint $P_3$'s coming from the Cartesian product of a matching in $S$ and a matching in $T$. So by Lemma~\ref{lem-mis-p3}, $$\MIS(L)\leq 2^{|X||Y|/2-(s+t)n/125}.$$
Since $s+t\ge 1$ and $\beta $ is sufficiently small,
$$|\cB(s,t)|\leq \binom{n}{3s}\binom{n}{2t}\cdot 2^{\beta \log(e/\beta )\cdot (3s+2t)n}\cdot  2^{|X||Y|/2-(s+t)n/125}\leq 2^{|X||Y|/2-n/200}.$$
\end{proof}

\begin{lemma}\label{claim-r}
If $s=t=0$ and $r\in\mathbb{Z}^+$, then $|\cB(r)|\le 2^{|X||Y|/2-n/6}.$
\end{lemma}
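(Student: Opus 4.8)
The plan is to bound $|\cB(r)|$ by counting, just as in Lemma~\ref{claim-st}, the choices for the inner graphs $S=G[X]$ and $T=G[Y]$ and then the choices for the $[X,Y]$-edges. Since $s=t=0$, the graph $S$ is a perfect (or near-perfect) matching on $X$ together with $r$ isolated vertices, and $T$ is an independent set on $Y$; in particular $S$ and $T$ are essentially determined up to relabelling, so there is no need to pay for the inner edges beyond a cheap $2^{o(n^2)}$ factor coming from the choice of which $r$ vertices of $X$ are isolated (at most $\binom{n}{r}\le 2^n$) --- this is negligible compared to the savings we will extract. The real work is to show $\MIS(L_{S\cup T}[A])\le 2^{(|X|-r)|Y|/2 + o(n)}$, or more precisely $2^{|X||Y|/2 - n/5}$ after combining with the constraint that $G$ be \emph{maximal} triangle-free.

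The key observation is that by Claim~\ref{claim_L}, $L_{S\cup T}[A]=S\square T$, and since $T$ is an edgeless graph on $|Y|$ vertices and $S$ is a matching plus $r$ isolated vertices, $S\square T$ is itself a disjoint union: $\frac{1}{2}(|X|-r)$ copies of $C_4$ (one $C_4$ per matching edge of $S$, blown up over... wait, over an edgeless $T$ each matching edge of $S$ gives $|Y|$ disjoint edges, i.e.\ a perfect matching on $2|Y|$ vertices) together with $r|Y|$ isolated vertices coming from the isolated vertices of $S$. So $\MIS(S\square T) \le 2^{(|X|-r)|Y|/2}\cdot 2^{r|Y|}$ if we naively allow the isolated vertices to be free --- but that is too weak, since $2^{r|Y|}$ can be huge. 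The point is that an isolated vertex $(x,y)$ of $S\square T$ corresponds to a potential edge $xy\in E(A)$ whose presence or absence in $G$ is \emph{not} constrained by the link graph, but \emph{is} constrained by maximality of $G$: because $x$ is isolated in $S$ and has some $Y$-neighborhood, and $X\cup Y$ is a max-cut, one argues as in Claim~\ref{cl-XY}(ii) that $x$ has many $Y$-neighbors forced, or alternatively that the edges incident to the $r$ isolated vertices of $X$ must form, together with the rest of $G$, a maximal configuration --- and I expect the intended argument is instead to \emph{redefine the link graph} to include these edges, i.e.\ use a slightly larger auxiliary graph $A'$ (or a different $S$) so that the $r|Y|$ edges incident to isolated vertices of $X$ are not ``free'' but get absorbed into the count $2^{|L|/2}$ via Lemma~\ref{lem-mis-p3}. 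Concretely: an isolated vertex $x$ of $S$ has, by the max-cut property, inner degree $0$ hence all of $x$'s edges go to $Y$; since $G$ is maximal triangle-free and $x$ is not adjacent to a matching edge of $X$, $x$ must be adjacent in $G$ to \emph{every} vertex of $Y$ that is not itself adjacent to a neighbor of $x$ --- and since $\Gamma(x)\cap X=\emptyset$, in fact $x$ is adjacent to all of $Y$, forcing $|Y|$ specific edges and contributing $0$ to the exponent. Summing over the $r$ isolated vertices kills the $2^{r|Y|}$ term entirely and, since the matching part of $S\square T$ has only $\frac{1}{2}(|X|-r)|Y|$ edges, gives $\MIS(S\square T)\le 2^{(|X|-r)|Y|/2} = 2^{|X||Y|/2 - r|Y|/2} \le 2^{|X||Y|/2 - r(n/2-\beta n)/2}\le 2^{|X||Y|/2-n/6}$ using $r\ge 1$, $|Y|\ge n/2-\beta n$, and $\beta$ small.

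So the steps, in order, would be: (1) observe $S$ is a matching plus $r$ isolated vertices and $T$ is independent, so $S\square T$ decomposes into $\frac12(|X|-r)$ matchings of size $|Y|$ plus $r|Y|$ isolated vertices (cf.\ Figure~\ref{fig3b}); (2) account for the $\le 2^n$ choices of inner structure, which is negligible; (3) argue that maximality of $G$ together with the max-cut property forces every edge corresponding to an isolated vertex of $S\square T$, so these contribute nothing; (4) apply Lemma~\ref{lem-mis-p3} (or the trivial bound $2^{|L|/2}$ for a matching) to the remaining part to get $\MIS\le 2^{(|X|-r)|Y|/2}$; (5) use $r\ge1$ and $|Y|\ge n/2-\beta n$ to conclude $|\cB(r)|\le 2^n\cdot 2^{(|X|-r)|Y|/2}\le 2^{|X||Y|/2-n/6}$.

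The main obstacle I anticipate is step (3): making precise why an isolated vertex $x$ of $S$ in a \emph{maximal} triangle-free $G$ (with max-cut $X\cup Y$) must be adjacent to all of $Y$, rather than merely to many vertices of $Y$. One must rule out the possibility that $x$'s non-neighbors in $Y$ are "blocked" by edges within $Y$ or by common neighbors --- here the max-cut hypothesis (so $\Gamma(x)\subseteq Y$) plus the fact that $T=G[Y]$ is independent (no edges inside $Y$) should do it: if $y\in Y$ is a non-neighbor of $x$, then adding $xy$ creates a triangle only if $x$ and $y$ share a common neighbor, but a common neighbor $z$ of $x$ and $y$ lies in $Y$ (since $\Gamma(x)\subseteq Y$), and then $z,y\in Y$ with $z\in N(y)$ contradicts $Y$ being independent. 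Hence $xy$ can be added, contradicting maximality unless $xy\in E(G)$ already. If instead one works with the genuinely near-bipartite (not exactly bipartite) $G$ where $Y$ is only \emph{almost} independent, one should first pass to the subclass where the earlier lemmas guarantee $Y$ independent, or absorb the $O(\beta n)$ exceptional vertices of $Y$ into a negligible $2^{O(\beta n^2)}$ overcount as was done repeatedly in the proof of Lemma~\ref{lem-asymp}.
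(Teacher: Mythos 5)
Your approach is the same as the paper's: decompose $S\square T$ into a matching of size $(|X|-r)|Y|/2$ plus $r|Y|$ isolated vertices, pay $\binom{n}{r}$ for the location of the isolated vertices, and bound the $[X,Y]$-extensions by $\MIS(S\square T)=2^{(|X|-r)|Y|/2}$. Your step (3) is correct but more elaborate than necessary: since Claim~\ref{cl_MIS} bounds the extensions by the number of \emph{maximal} independent sets of the link graph, and every maximal independent set contains all isolated vertices, the $r|Y|$ isolated vertices of $S\square T$ automatically contribute a factor of $1$ rather than $2^{r|Y|}$; your direct argument that maximality of $G$ forces each such edge $xy$ is just the graph-level restatement of this.

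There is, however, a genuine flaw in your step (5) as written: the bound $\binom{n}{r}\le 2^n$ is too lossy, and the displayed chain $2^n\cdot 2^{(|X|-r)|Y|/2}\le 2^{|X||Y|/2-n/6}$ is false for small $r$ (for $r=1$ the saving is only $|Y|/2\approx n/4$, which does not absorb a factor $2^n$). The fix is the one the paper uses: bound $\binom{n}{r}\le n^r=2^{r\log n}$, so that the cost per isolated vertex is $\log n$ while the saving per isolated vertex is $|Y|/2\ge n/5$, giving $|\cB(r)|\le 2^{|X||Y|/2+r\log n-rn/5}\le 2^{|X||Y|/2-rn/6}\le 2^{|X||Y|/2-n/6}$ for every $r\ge 1$. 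With that replacement your argument is complete.
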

\begin{proof}
By the definition of $\cB(r)$, $X$ consists of $r$ isolated vertices and a matching of size $(|X|-r)/2$, and $Y$ is an independent set. Hence the graph $L_{S\cup T}(A)=S\square T$ consists of a matching of size $(|X|-r)|Y|/2$ and isolated vertices (see Figure~\ref{fig3b}). 
There are at most ${n\choose r}$ ways to pick the isolated vertices in $X$ and at most $\MIS(L_{S\cup T}(A))$ ways to choose the $[X,Y]$-edges. Recall that $|Y|\ge n/2-\beta n$. Thus we have
$$|\cB(r)|\le {n\choose r}\cdot 2^{(|X|-r)|Y|/2}\le 2^{|X||Y|/2+r\log n-rn/5}\le 2^{|X||Y|/2-rn/6} \le 2^{|X||Y|/2-n/6}.$$
\end{proof}

\medskip

\noindent\textbf{Case 1:} $|X|$ is even. For simplicity, denote $\cG:=\cG(X,Y,M)$ and $\cH:=\cH(X,Y,M)$.

\medskip

\begin{figure*}[t!]
\centering
  \includegraphics{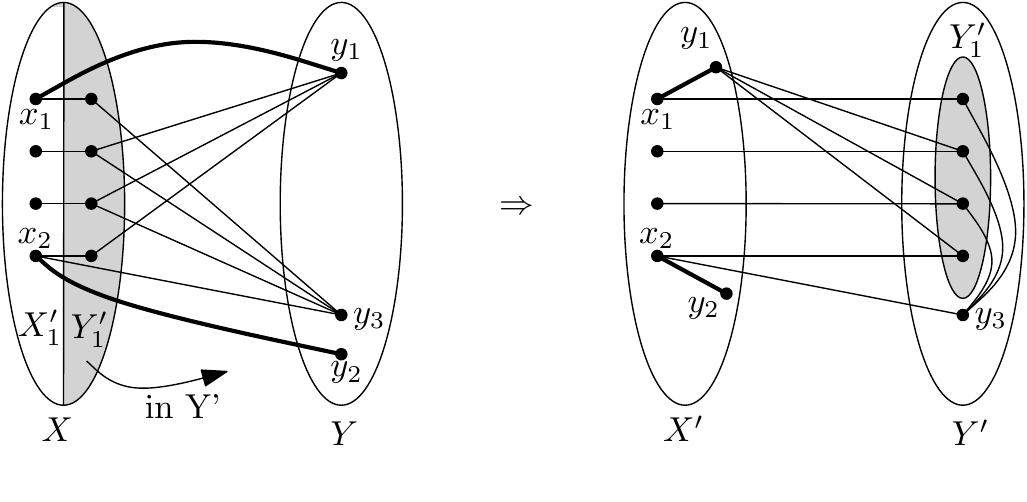}
    \caption{$(X',Y',M')$ is uniquely determined after choosing $x_1y_1\in M'$ (Lemma~\ref{lem-overcount}).}\label{fig4}
\end{figure*}

\begin{lemma}\label{lem-overcount}
An $n$-vertex graph $G$ is in at most $n^2$ different classes $\cG(X,Y,M)$.
\end{lemma}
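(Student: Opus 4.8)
The statement claims that a fixed graph $G$ can belong to at most $n^2$ classes $\cG(X,Y,M)$. Recall that $G\in\cG(X,Y,M)$ means $X\cup Y$ is a max-cut, $G[X]=M$ is a perfect matching, and $Y$ is an independent set. So the question is: given $G$, how many triples $(X,Y,M)$ witness this structure? The key insight I would exploit is that such a triple is essentially rigid — once we pin down one matching edge of $M$, everything else is forced. The plan is therefore: (1) show that if $(X,Y,M)$ is a valid witness, then $M$ is exactly the set of edges of $G$ lying inside $X$, and $X$ is exactly the set of vertices of positive inner degree together with (at most) the vertices matched to them; (2) show that the pair $(X,M)$ — and hence $Y = V\setminus X$ — is determined by the choice of a single edge $x_1y_1$, where $x_1\in X$ and $y_1\in Y$ (or something of comparable size, giving the $n^2$ bound since there are fewer than $n^2$ ordered pairs of vertices).

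**Key steps in order.** First I would observe that in any $G\in\cG(X,Y,M)$, every edge of $G$ is either an edge of $M$ (inside $X$) or an $[X,Y]$-edge, since $Y$ is independent. Moreover, since $G$ is maximal triangle-free with this structure, each vertex $y\in Y$ must be adjacent to $G$ (it cannot be isolated in the whole graph, or we could add edges — more precisely, the "exactly one edge between each matching pair and each $y$" remark in the introduction shows $y$ has degree $|X|/2 \ge 1$). So $Y$ consists entirely of vertices whose \emph{every} neighbor lies in $X$, i.e. vertices of inner degree $0$ with respect to the partition. Now suppose $(X,Y,M)$ and $(\tilde X,\tilde Y,\tilde M)$ are two witnesses for the same $G$. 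I would pick any edge $uv\in E(G)$ and ask how it sits in each partition. If $uv\in M$, then $u,v\in X$ both have inner degree $\ge 1$ in the $X/Y$-partition. Then trace the component structure: from $x_1$, its unique $M$-partner $x_2$ is the unique neighbor of $x_1$ that also has a neighbor in common… actually the cleaner route is the one the paper's Figure 4 caption suggests — fix the edge $x_1y_1$ with $x_1\in X, y_1\in Y$; then $x_1$'s $M$-partner $x_2$ is forced (it is the unique neighbor of $x_1$ in $X$, and $X$-membership propagates), and then $X$ is the set of all vertices reachable from $\{x_1,x_2\}$ by the rule "a vertex is in $X$ iff it has an inner neighbor," which after fixing the seed becomes deterministic. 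I would make this propagation precise: once we declare $x_1\in X$ and $y_1\in Y$, for every other vertex $w$, membership of $w$ in $X$ versus $Y$ is determined by the local structure of $G$ around $w$ relative to the already-placed vertices.

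**The main obstacle.** The delicate point is the propagation argument — showing that the seed $(x_1,y_1)$ (an ordered pair, fewer than $n^2$ choices) really does determine the whole partition, rather than just constraining it. The subtlety is that $G$ might a priori be "symmetric" in a way that lets a vertex go into either side; one has to use that $G[X]=M$ is a \emph{perfect} matching (no isolated vertices in $X$) together with $Y$ independent to kill such ambiguity. Concretely, a vertex $w\notin\{x_1,x_2,y_1\}$ has some neighbor $z$; I would argue by a short case analysis on whether $z$ has already been placed and on whether the edge $wz$ could be a matching edge, using the degree/triangle-freeness constraints, that $w$'s side is forced. Once the partition $X\cup Y$ is forced by the seed, $M = E(G[X])$ is forced automatically, so the number of witnesses is at most the number of admissible seeds, which is at most $n^2$. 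I expect the write-up to be short: the real content is packaging "fix one edge with a chosen orientation $\Rightarrow$ everything propagates," and the $n^2$ is simply the count of ordered vertex pairs (or ordered edge-plus-endpoint choices, still $O(n^2)$, and one checks it is genuinely $\le n^2$).
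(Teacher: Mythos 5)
Your overall strategy is the same as the paper's: any second witness $(X',Y',M')$ must contain a matching edge $x_1y_1\in M'$ crossing the reference cut (because $Y$ is independent and $M'\neq M$), that edge serves as a seed, and if the seed determines $(X',Y',M')$ then the number of classes is bounded by the number of seeds, which is at most $n^2$. Up to this point the proposal matches the paper. The problem is that the rigidity step --- the only step with real mathematical content --- is not actually established. The propagation rule you write down, ``a vertex is in $X$ iff it has an inner neighbor,'' is circular, since inner neighbors are defined relative to the very partition you are reconstructing; and the vertex-by-vertex scheme ``place $w$ by examining an already-placed neighbor $z$'' is not obviously well-founded: nothing in the sketch rules out two consistent completions, which is exactly what the lemma must exclude.

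What actually makes the reconstruction deterministic is a global use of maximality: every $y\in Y$ is adjacent to exactly one endpoint of every edge of $M$. Once $x_1y_1\in M'$ is fixed, the set $Y_1':=N_X(y_1)\cup\{x_1'\}\setminus\{x_1\}$ (with $x_1'$ the $M$-partner of $x_1$) is a full transversal of $M$ that is forced into $Y'$, and the complementary transversal $X_1':=X\setminus Y_1'$ is forced into $X'$ because each of its vertices has a neighbor in $Y_1'\subseteq Y'$ and $Y'$ is independent. The residual freedom is the choice of $M'$-partners in $Y$ for the vertices of $X_1'$, and the key claim is that each $x_2\in X_1'$ admits at most one candidate: two candidates $y_2,y_3$ would each be adjacent to all but one vertex of $Y_1'$, so whichever of them is not matched to $x_2$ lands in $Y'$ while having neighbors in $Y'$, contradicting independence. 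Some argument of this shape --- in particular, some explicit use of the ``exactly one endpoint of each matching edge'' structure to control the candidates simultaneously rather than one neighbor at a time --- is needed before ``the seed determines everything'' can be asserted.
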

\begin{proof}
Let $G\in \cG(X,Y,M)$. Recall that $G[X]=M$ and $Y$ is an independent set. Thus $G$ can be in a different class $\cG(X',Y',M')$ if and only if $X'\neq X$, $Y'\neq Y$ and $M'\neq M$. Since $M'\neq M$ and $Y$ is an independent set, there exists an edge $x_1y_1$ in $M'$ with $x_1\in X$ and $y_1\in Y$. There are at most $n^2$ ways to choose such an edge. Since $G$ is a maximal triangle-free graph, every vertex in $Y$ is adjacent to exactly one vertex from each edge in $M$. Let $x_1'$ be the neighbor of $x_1$ in $X$, and set $Y_1':=N_X(y_1)\cup\{x_1'\}-\{x_1\}$ and $X_1'=X\setm Y_1'$. Note that $x_1y_1\in M'$ and $G[X']=M'$ imply $Y'_1\subseteq Y'$. Since $Y'$ is an independent set, it follows that $X'_1\subseteq X'$. 

We claim that for any vertex $x_2\in X'_1$, there is at most one vertex in $Y$ that can serve as its neighbor in $M'$ (see Figure~\ref{fig4}). Suppose to the contrary that there are two such vertices $y_2$ and $y_3$ in $Y$. Then neither of $y_2$ and $y_3$ has neighbors in $X_1'-\{x_2\}$, and so both $y_2$ and $y_3$ are adjacent to all but one (the neighbor of $x_2$) vertex of $Y_1'\subseteq Y'$. If now $x_2y_2\in M'$, then $y_3\in Y'$. But $y_3$ is adjacent to some vertices of $Y'$, which contradicts the independence of $Y'$. In conclusion, after we pick one of the edges of $M'$ with exactly one end in $X$ and one end in $Y$, since the graph $G$ is labeled, the rest of $X'$, $Y'$ and $M'$ is uniquely determined. 
\end{proof}

By Lemma~\ref{lem-overcount}, it is sufficient to show that for any choice of $(X,Y,M)$ with $|X|$ even, 
\begin{eqnarray}\label{eq-ratio}
\frac{|\bigcup_{s,t}\cB(X,Y,M,s,t)\setm \cB(X,Y,M,0)|}{|\cG(X,Y,M)|}\le 2^{-n/300}.
\end{eqnarray}

\begin{lemma}\label{claim-g}
We have $|\cG|\ge (1+o(1))2^{|X||Y|/2}.$
\end{lemma}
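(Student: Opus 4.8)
The plan is to lower-bound $|\cG(X,Y,M)|$ by counting directly the maximal triangle-free graphs that arise from the lower bound construction, i.e.\ graphs obtained by adding $M$ to $X$ and, for each matching edge $x_1x_2\in M$ and each $y\in Y$, choosing exactly one of $x_1y$, $x_2y$. This already produces $2^{|X||Y|/2}$ triangle-free graphs $G_0$, each of which has $G_0[X]=M$ and $Y$ independent; the only thing that could fail is that $G_0$ is not \emph{maximal} triangle-free. So the task reduces to showing that all but a $o(1)$-fraction of these $2^{|X||Y|/2}$ graphs are already maximal, and that distinct choices give distinct maximal graphs (the latter is immediate, since $G_0[X]=M$ and $Y$ independent means no further $[X,Y]$-edge can be added without creating a triangle, and we do not add edges inside $X$ or $Y$, so $G_0$ itself is the unique maximal graph extending this choice). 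Thus $|\cG|\ge (1-o(1))2^{|X||Y|/2}$, which is what is claimed.

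The first step is to pin down exactly when such a $G_0$ fails to be maximal. Since $G_0[X]=M$ is already a maximal (in fact perfect or near-perfect) matching on $X$ and $Y$ is independent and fully connected to $X$ in the sense that every $y$ sees one endpoint of every edge of $M$, the only candidate edges to add are: an edge inside $Y$, an edge inside $X$ between two non-matched vertices (only possible if $|X|$ is odd — here it is even, so irrelevant), or an edge inside $X$ joining endpoints of two different matching edges. An edge $yy'$ inside $Y$ can be added iff $y,y'$ have no common neighbor in $X$, i.e.\ iff for every matching edge $x_1x_2$, $y$ and $y'$ picked \emph{different} endpoints. An edge $x_ix_j$ across two matching edges $x_ix_i'$, $x_jx_j'$ can be added iff $x_i$ and $x_j$ have no common neighbor in $Y$, i.e.\ no $y\in Y$ chose both $x_i$ (from its edge with $x_i'$) and $x_j$ (from its edge with $x_j'$). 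So the plan is to union-bound the probability (under the uniform random choice) that some such forbidden configuration occurs.

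The main computation is then a routine second-moment/union-bound estimate: for a fixed pair $y,y'\in Y$, the probability that they disagree on all $|X|/2$ matching edges is $2^{-|X|/2}$, and there are at most $\binom{n}{2}$ such pairs, so by the union bound the expected number of "addable $Y$-edges" is at most $n^2 2^{-|X|/2}=o(1)$ since $|X|\ge n/2-\beta n$; similarly, for a fixed pair of matching edges, the probability that no $y\in Y$ picks the corresponding two endpoints is $(3/4)^{|Y|}$, and there are at most $\binom{n/2}{2}$ pairs, giving expected number at most $n^2(3/4)^{|Y|}=o(1)$. Hence with probability $1-o(1)$ the random $G_0$ is maximal, so at least $(1-o(1))2^{|X||Y|/2}$ of the constructed graphs lie in $\cG(X,Y,M)$, proving $|\cG|\ge(1+o(1))2^{|X||Y|/2}$. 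The only mild subtlety — not really an obstacle — is bookkeeping the parity/near-perfect-matching case, but since the lemma restricts to $|X|$ even, $M$ is a genuine perfect matching on $X$ and this issue does not arise.
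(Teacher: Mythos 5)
Your union-bound computation correctly handles one of the two ways a constructed graph can fail to lie in $\cG(X,Y,M)$, and it matches the paper's treatment of that case: the probability that an edge inside $Y$ is addable is $(1/2)^{|X|/2}$, the probability that an edge between two distinct matching edges of $X$ is addable is $(3/4)^{|Y|}$, and summing over at most $\binom{n}{2}$ pairs shows that all but $o(2^{|X||Y|/2})$ of the outcomes of the edge-choosing step are already maximal triangle-free. This is exactly the paper's bound on the subfamily it calls $\cH_2$.

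However, there is a genuine gap: you have forgotten that membership in $\cG(X,Y,M)$ requires, by definition, that $X\cup Y$ be a \emph{max-cut} of $G$, not merely that $G$ be maximal triangle-free with $G[X]=M$ and $Y$ independent. Nothing in your argument rules out that for many of the $2^{|X||Y|/2}$ outcomes the partition $X\cup Y$ fails to be a max-cut, in which case those graphs do not belong to $\cG(X,Y,M)$ and cannot be counted toward the lower bound; this matters downstream, because the ratio \eqref{eq-ratio} and the overcounting control of Lemma~\ref{lem-overcount} are stated for the classes $\cG(X,Y,M)$ with the max-cut condition built in. Handling this is the longer half of the paper's proof: it first restricts to the outcomes that are $2\ga^3n^2$-close to bipartite, and then shows that the family $\cH_1$ of outcomes for which $X\cup Y$ is not a max-cut has size at most $2^{\ga^{1/2}n^2}=o(2^{|X||Y|/2})$, via a structural analysis of an alternative max-cut $X'\cup Y'$ (every vertex of $Y$ moved into $X'$ would have to be adjacent to almost all of one side of an induced partition $X=X_1\cup X_2$, which pins down its neighborhood up to $\binom{|X_1|}{5\ga n}\binom{|X_2|}{5\ga n}$ choices). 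You would need to supply an argument of this kind --- or a direct probabilistic proof that the random outcome has $X\cup Y$ as a max-cut with probability $1-o(1)$ --- to complete the proof.
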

\begin{proof}
Recall that $|X|,|Y|\ge n/2-\beta n$, and therefore $|\cH|= 2^{|X||Y|/2}\gg 2^{n^2/8-\beta n^2}$. Running the same proof as Lemma~\ref{lem-large} (start the proof by invoking Theorem~\ref{thm1} with $\de_{2.2}(\beta)$, replace $\ep'$ by $\beta$ and $\ep$ by $\ga^3$) implies that almost all graphs in $\cH$ are $2\ga^3 n^2$-close to bipartite. Let $\cH'\subseteq \cH$ be the subfamily consisting of all those that are $2\ga^3 n^2$-close to bipartite. Then it is sufficient to show $|\cH'\setm \cG|=o(2^{|X||Y|/2})$. There are two types of graphs in $\cH'\setm\cG$: 

(i) $\cH_1$: those that $X\cup Y$ is not one of its max-cut;

(ii) $\cH_2$: those with $X\cup Y$ as a max-cut, but not maximal after (P2), i.e.~there are inner edges added in $X$ and/or $Y$ in (P3).

We first bound the number of graphs in $\cH_1$. Let $G\in\cH_1$ with a max-cut $X'\cup Y'$ minimizing $|X\triangle X'|$. We may assume that $|X'|,|Y'|\ge n/2-\ga n$ and $\De(X'),\De(Y')\le \ga n$. Indeed, since graphs in $\cH_1$ are $2\ga^3n^2$-close to bipartite, if $|X'|\le n/2-\ga n$ or $\De(X')\ge \ga n$, then the same proof as the proof of Claim~\ref{cl-XY} yields that the number of such graphs in $\cH_1$ is at most 
$${n^2\choose 2\ga^3n^2}\cdot2^{n^2/8-\ga^2n^2/2}\le 2^{2\ga^3n^2\log(e/2\ga^3)}2^{n^2/8-\ga^2n^2/2}\ll 2^{n^2/8-\ga n^2},$$ 
which is exponentially smaller than $|\cH'|=(1+o(1))2^{|X||Y|/2}\gg 2^{n^2/8-\beta n^2}$.

Let $X_1:=X\cap X'$, $X_2:=X\setm X_1$, $Y_1:=Y\cap X'$, and $Y_2:=Y\setm Y_1$ (see Figure~\ref{fig5a}). Since $X\cup Y$ is not a max-cut of $G$, the set $X\triangle X'=Y\triangle Y'=X_2\cup Y_1$ is non-empty. By symmetry, we can assume that $Y_1\ne\emptyset$. Recall that from (P2), for every $y\in Y_1\subseteq X'$, we have $d_X(y)=|X|/2\ge n/4-\beta n/2$. It follows that $|X_2|\ge n/4-2\ga n$, since otherwise $d_{X'}(y)\ge d_{X_1}(y)=d_{X}(y)-|X_2|\ge 3\ga  n/2$, contradicting $\De(X')\le \ga n$. Similarly, we have $|X_1|\ge n/4-2\ga n$. Recall also that $|X|=n-|Y|\le n/2+\beta n$. Thus for $i=1,2$, 
$$|X_i|=|X|-|X_{3-i}|\le \frac{n}{2}+\beta n-\left(\frac{n}{4}-2\ga n\right)\le \frac{n}{4}+3\ga n.$$
Therefore, for $i=1,2$, every vertex $y\in Y_i$ is adjacent to at most $\ga n$ vertices in $X_i$ and all but at most 
$$|X_{3-i}|-(d_X(y)-d_{X_i}(y))\le \frac{n}{4}+3\ga n-\left(\frac{n}{4}-\frac{\beta n}{2}\right)+\ga n\le 5\ga n$$ 
vertices in $X_{3-i}$, as shown in Figure~\ref{fig5a}. Hence, for fixed $X_1$ and $X_2$, the number of ways to choose $N(y)$ for any $y\in Y$ is at most ${|X_1|\choose 5\ga n}{|X_2|\choose 5\ga n}$. Since the number of graphs in $\cH_1$ is precisely the number of ways to add the $[X,Y]$-edges in (P2), we have 
$$|\cH_1|\le 2^{|X|}\cdot 2^{|Y|}\cdot \left({|X_1|\choose 5\ga n}{|X_2|\choose 5\ga n}\right)^{|Y|}\le 2^{\ga^{1/2}n^2},$$
where the first two terms count the number of ways to partition $X=X_1\cup X_2$ and $Y=Y_1\cup Y_2$, and the last term bounds the number of ways to choose the $[X,Y]$-edges.

\begin{figure*}[t!]
    \centering
    \begin{subfigure}[t]{0.4\textwidth}
        \centering
        \includegraphics{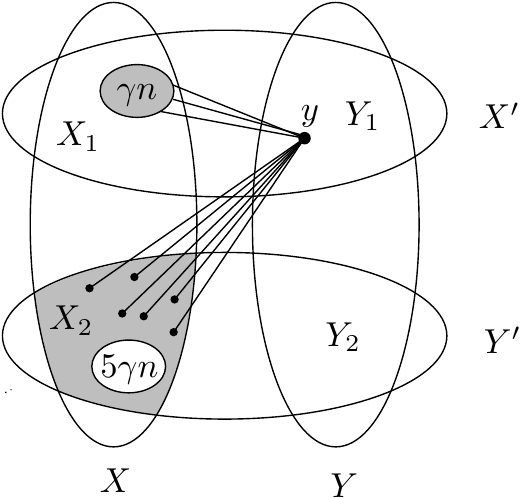}
        \caption{The structure of graphs in $\cH_1$ with max-cut $X'\cup Y'$.}\label{fig5a}
    \end{subfigure}%
    \hspace{2cm}
        \begin{subfigure}[t]{0.4\textwidth}
        \centering
        \includegraphics{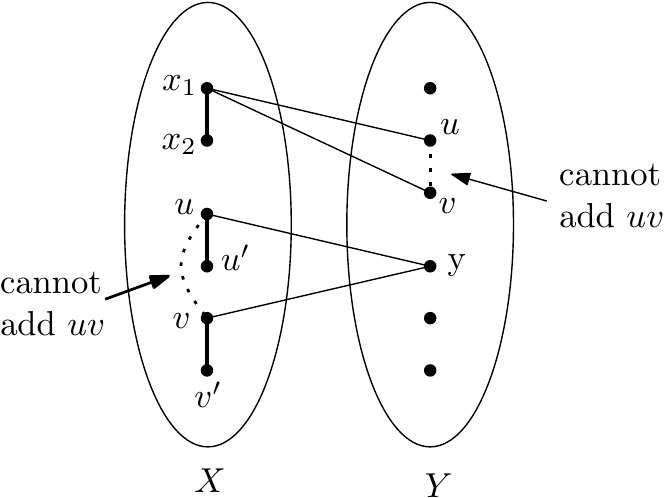}
        \caption{Two examples of $uv$ that cannot be added when forming a graph from $\cH_2$.}\label{fig5b}
    \end{subfigure}
  \caption{}
\end{figure*}

We now bound the number of graphs in $\cH_2$. For any graph $G\in \cH_2$, some inner edges were added in (P3). Suppose that $[X,Y]$-edges added in (P2) were chosen randomly (each of $x_1y$ and $x_2y$ with probability $1/2$). Clearly, $uv$ can be added in (P3) if and only if $u$ and $v$ have no common neighbor. Consider the case when $u,v\in X$ and let $uu', vv'$ be the corresponding edges in $M$ (see Figure~\ref{fig5b}). Every $y\in Y$ is adjacent to exactly one of $u,u'$ and exactly one of $v,v'$. Thus the probability that $y$ is a common neighbor of $u$ and $v$ is $1/4$, which implies that $uv$ can be added with probability $(3/4)^{|Y|}$. Let now $u,v\in Y$. Then $u$ and $v$ have no common neighbor if and only if for every $x_1x_2\in M$, $u$ and $v$ chose different neighbors among $x_1$ and $x_2$. So in this case we can add $u,v$ with probability $(1/2)^{|X|/2}.$ Summing over all possible outcomes of (P2) and all possible choices for $uv$ implies

$$|\cH_2|\le 2^{|X||Y|/2}\cdot {n\choose 2}\cdot \left(\left(\frac{1}{2}\right)^{|X|/2}+ \left(\frac{3}{4}\right)^{|Y|}\right)\ll 2^{|X||Y|/2-n/5}.$$
Hence, we have
$$|\cH'\setm \cG|=|\cH_1|+|\cH_2|\le 2^{\ga^{1/2}n^2}+ 2^{|X||Y|/2-n/5}=o(2^{|X||Y|/2}).$$
\end{proof}

Since $s,t,r\le n$, Lemmas~\ref{claim-st},~\ref{claim-r} and~\ref{claim-g} imply~\eqref{eq-ratio}:

$$\frac{\left|\displaystyle\bigcup_{s,t}\cB(s,t)\setm \cB(0)\right|}{|\cG|}=\frac{\displaystyle\sum_{s,t:~ s+t\ge 1}|\cB(s,t)|+\displaystyle\sum_{r\ge 1}|\cB(r)|}{|\cG|}\le \frac{(n^2+n)\cdot 2^{|X||Y|/2-n/200}}{(1+o(1))2^{|X||Y|/2}}\le 2^{-n/300}.$$

\medskip

\noindent\textbf{Case 2:} $|X|$ is odd.

\medskip

Fix an arbitrary choice of $X,Y,M$ with $|X|$ odd and let $x\in X$ be the vertex not covered by $M$. By Lemmas~\ref{claim-st} and~\ref{claim-r}, 
$$\left|\bigcup_{s,t}\cB(X,Y,M,s,t)\right|\le \displaystyle\sum_{s,t:~ s+t\ge 1}|\cB(X,Y,M,s,t)|+\displaystyle\sum_{r\ge 1}|\cB(X,Y,M,r)|\le 2^{|X||Y|/2-n/300}.$$

Pick an arbitrary vertex $y\in Y$, define $X_0=X\cup\{y\}, Y_0=Y\setm\{y\}$ and $M_0=M\cup\{xy\}$. Then by Lemma~\ref{claim-g}, we have 
$$|\cG(X_0,Y_0,M_0)|\ge (1+o(1))2^{|X_0||Y_0|/2}\ge 2^{|X||Y|/2-(|X|-|Y|)/2-1}\ge 2^{|X||Y|/2-2\beta n},$$
since $|X|-|Y|\le 2\beta n$. Notice that any $(X_0,Y_0,M_0)$ with $|X_0|$ even can be obtained from at most $n$ different triples $(X,Y,M)$ with $|X|$ odd in this way. Together with Lemma~\ref{lem-overcount}, it is sufficient to show that $\bigcup_{s,t}\cB(X,Y,M,s,t)$ is negligible compared to $\cG(X_0,Y_0,M_0)$:
$$\frac{\left|\bigcup_{s,t}\cB(X,Y,M,s,t)\right|}{|\cG(X_0,Y_0,M_0)|}\le\frac{2^{|X||Y|/2-n/300}}{2^{|X||Y|/2-2\beta n}}\le 2^{-n/400}.$$
This completes the proof of Theorem~\ref{thm_main}.

\end{document}